\def\captionof#1#2{{\def\@captype{#1}#2}}
\newcounter{tablegroup}
\newcounter{subtable}[tablegroup]
\newtheorem{thm}{Theorem}[section]
\newtheorem{cor}[thm]{Corollary}
\newtheorem{lem}[thm]{Lemma}
\newtheorem{rem}[thm]{\bf Remark}
\numberwithin{equation}{section}
\begin{document}
\title{Pointwise-recurrent dendrite maps}

\author{Issam Naghmouchi}
\address{Issam Naghmouchi, University of Carthage, Faculty of Sciences of Bizerte, Department of Mathematics,
Jarzouna, 7021, Tunisia.} \footnote{This work was supported by the
research unit 99UR/15-15}

\email{ issam.nagh@gmail.com}

\subjclass[2000]{37E99, 37B20}

\keywords{dendrite, dendrite map, $\omega$-limit set, minimal set,
periodic point, recurrent point, regularly recurrent point,
pointwise-recurrent map.}

\begin{abstract} Let $D$ be a dendrite and  $f:D\rightarrow D$ a continuous map. Denote by $E(D)$ and $B(D)$ the sets of endpoints and branch
points of $D$ respectively. We show that if $E(D)$ is countable
(resp. $B(D)$ is discrete) then $f$ is pointwise-recurrent if and
only if $f$ is pointwise periodic homeomorphism (resp. every point
in $D\backslash E(D)$ is periodic).
\end{abstract}
\maketitle

\section{\bf Introduction}
Recurrence and periodicity play an important role in studying
dynamical systems. It is interesting to study maps $f: X\rightarrow
X$ from a topological space $X$ to itself that are
pointwise-periodic (i.e. all points in $X$ are periodic), or
pointwise-recurrent (i.e. all points in $X$ are recurrent).
Montgomery \cite{mont} showed that if $X$ is a connected topological
manifold, a pointwise-periodic homeomorphism $f: X\rightarrow X$
must be periodic. Weaver \cite{w} showed that if $X$ is a continuum
embedded in an orientable $2$-manifold, an orientation-preserving
$C^{1}$-homeomorphism  $f: X\rightarrow X$ has this property.
Gottschalk \cite{Gott} proved that if $X$ is a continuum then
relatively recurrent homeomorphism $f: X\rightarrow X$(i.e. the
closure of recurrent point set is dense in $X$) has every recurrent
cut point periodic. In \cite{ov}, Oversteegen and Tymchatyn showed
that recurrent homeomorphisms of the plane are periodic. Kolev and
P$\acute{e}$rou$\grave{e}$me \cite{ko} proved that recurrent
homeomorphisms of a compact surface with negative Euler
characteristic are still periodic. Recently, Mai \cite{Ma1} showed
that a graph map $f: G\rightarrow G$ is pointwise-recurrent if and
only if one of the following statements holds:

(1) $G$ is a circle, and $f$ is a homeomorphism topologically
conjugate to an irrational rotation

(2) $f$ is a periodic homemorphism.

In this paper we will study pointwise-recurrent dendrite maps, their
dynamical behaviors are both important and interesting in the study
of dynamical systems and continuum theory. Recent interest in
dynamics on dendrites is motivated by the fact that dendrites have
often appear as Julia sets in complex dynamics (see \cite{Bea}). In
(\cite{Ac}, \cite{Bal}, \cite{Ef1}, \cite{Ma2}, \cite{Nagh1} and
\cite{Nagh}) several results concerning dendrites
 were obtained. In \cite{Nagh}, we proved that every relatively recurrent monotone dendrite map have all its cut points
periodic.  Before stating our main results, we recall some basic
properties of dendrites and dendrite maps.

A continuum is a compact connected metric space. A topological space
is arcwise connected if any two of its points can be joined by an
arc. We use the terminologies from Nadler \cite{Nadler}. An arc is
any space homeomorphic to the compact interval $[0,1]$. By a
\textit{dendrite} $D$, we mean a locally connected continuum which
contains no homeomorphic copy to a circle. Every sub-continuum of a
dendrite is a dendrite (\cite{Nadler}, Theorem 10.10) and every
connected subset of $D$ is arcwise connected (\cite{Nadler},
Proposition 10.9). In addition, any two distinct points $x,y$ of a
dendrite $D$ can be joined by a unique arc with endpoints $x$ and
$y$, denote this arc by $[x,y]$ and let denote by
$[x,y)=[x,y]\setminus\{y\}$ (resp. $(x,y]=[x,y]\setminus\{x\}$ and
$(x,y)=[x,y]\setminus\{x,y\}$). A point $x\in D$ is called an
\textit{endpoint} if $D\setminus\{x\}$ is connected. It is called a
\textit{branch point} if $D\setminus \{x\}$ has more than two
connected components. Denote by $E(D)$ and $B(D)$ the sets of
endpoints, and branch points of $D$ respectively. A point $x\in
D\setminus E(D)$ is called a \textit{cut point}. The set of cut
points of $D$ is dense in $D$. A tree is a dendrite with finite set
of endpoints.

Let $\mathbb{Z}_{+}$ and $\mathbb{N}$ be the sets of non-negative
integers and positive integers respectively. Let $X$ be a compact
metric space with metric $d$ and $f: X\longrightarrow X$ be a
continuous map. Denote by $f^{n}$ the $n$-th iterate of $f$; that
is,  $f^{0} =\textrm{id}_{X}$: the Identity and $f^{n} = f\circ
f^{n-1}$ if $n\geq 1$. For any $x\in X$ the subset $O_{f}(x)=
\{f^{n}(x): \ n\in\mathbb{Z}_{+}\}$ is called the $f$-orbit of $x$.
A point $x\in X$  is called periodic of prime period
$n\in\mathbb{N}$ if $f^{n}(x)=x$  and $f^{i}(x)\neq x$ for $1\leq
i\leq n-1$. A subset $A$ of $X$ is called \textit{$f$-invariant} if
$f(A)\subset A$. It is called \textit{a minimal set of $f$} if it is
non-empty, closed, $f$-invariant and minimal (in the sense of
inclusion) for these properties. For a subset $A$ of $X$, denote by
$\overline{A}$ the closure of $A$ and by diam($A)$ the diameter of
$A$. We define the $\omega$-limit set of a point $x$ to be the set
\begin{align*}
 \omega_{f}(x) & = \{y\in X: \exists\ n_{i}\in \mathbb{N},
n_{i}\rightarrow\infty, \lim_{i\rightarrow\infty}d(f^{n_{i}}(x), y)
= 0\}\\
& = \underset{n\in \mathbb{N}}\cap\overline{\{f^{k}(x): k\geq n\}}.
\end{align*}

The set $\omega_{f}(x)$ is a non-empty, closed and strongly
invariant set, i.e. $f(\omega_{f}(x))= \omega_{f}(x)$. A point $x\in
X$ is said to be:

-  \textit{recurrent} for $f$ if $x\in\omega_{f}(x)$.

- \textit{almost periodic} if for any neighborhood $U$ of $x$ there
exists $N\in\mathbb{N}$ such that $\{f^{n+i}(x): i=0,1,\dots,N\}\cap
U\neq\emptyset$ for all $n\in\mathbb{N}$.

- \textit{regularly recurrent} if for any $\varepsilon>0$, there is
$N\in\mathbb{N}$ such that $d(x,f^{kN}(x))<\varepsilon$ for all
$k\in\mathbb{N}$.

It is easy to see that if $x$ is regularly recurrent then it is
almost periodic, hence $\omega_{f}(x)$ is a minimal set (see
\cite{lsB}, Proposition 5, Chapter V).

Let Fix$(f)$, P($f)$, AP($f)$ and R($f)$ denote the set of fixed
points, periodic points, almost periodic and recurrent points
respectively. Then we have the following inclusion relation
$\rm{Fix}(f)\subset\rm{P}(f)\subset \rm{AP}(f)\subset \rm{R}(f)$.
  We say that $f$ is
\begin{itemize}
\item [-] \textit{pointwise-periodic} if P$(f)= X$.

\item [-] \textit{pointwise-recurrent} if R($f)= X$.

\item [-] \textit{relatively recurrent} if $\overline{\rm{R}(f)}= X$.
\end{itemize}
\medskip

A continuous map from a dendrite into itself is called a
\textit{dendrite map}.

Our main results are the following:

\begin{thm}\label{pr1} Let $f:~D\rightarrow D$ be a dendrite map. If B($D)$ is discrete then $f$ is
pointwise-recurrent if and only if $f$ is a homeomorphism and every
cut point is periodic.
\end{thm}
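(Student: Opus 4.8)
The argument splits into the two implications; discreteness of $B(D)$ is needed only for the forward one.

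\emph{Converse.} Suppose $f$ is a homeomorphism all of whose cut points are periodic. Cut points are dense in $D$ and periodic points are recurrent, so it suffices to show every endpoint $e\in E(D)$ is recurrent. Fix $q\in D\setminus\{e\}$; every point of the arc $(e,q)$ is a cut point, hence periodic, so choose cut points $c_n\in(e,q)$ with $c_{n+1}\in(e,c_n)$, $c_n\to e$, and periods $p_n$. Let $U_n$ be the component of $D\setminus\{c_n\}$ containing $e$. Because $e$ is an endpoint (so $e$ lies on no arc $[z,q]$ with $z\ne e$), the subcontinua $\overline{U_n}=U_n\cup\{c_n\}$ decrease to $\{e\}$, hence $\operatorname{diam}\overline{U_n}\to0$. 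The homeomorphism $f^{p_n}$ fixes $c_n$ and permutes the components of $D\setminus\{c_n\}$; if the orbit of $U_n$ under this permutation is finite, say $f^{sp_n}(U_n)=U_n$, then $f^{ksp_n}(e)\in\overline{U_n}$ for all $k$, so $e$ is regularly recurrent (let $n\to\infty$). If that orbit is infinite, the sets $f^{kp_n}(U_n)$ are pairwise disjoint components of $D\setminus\{c_n\}$, so their diameters tend to $0$, whence $f^{kp_n}(e)\to c_n$; grouping iterates of $f$ by residue $\bmod\ p_n$ then yields $\omega_f(e)\subseteq O_f(c_n)$, a finite set, so $e\in O_f(c_n)$ is periodic. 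In either case $e\in R(f)$.

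\emph{Forward direction: $f$ is a homeomorphism.} Surjectivity is automatic: $x\in\omega_f(x)\subseteq\overline{f(D)}=f(D)$ for every $x$. For injectivity, first note that $f$ collapses no nondegenerate arc: if $f([a,b])=\{q\}$ with $a\ne b$, choose an order-$2$ point $x_0\in(a,b)$ with a closed arc-neighbourhood inside $(a,b)\subseteq f^{-1}(q)$; then $x_0\in\operatorname{int}f^{-1}(q)$, and since $x_0$ is recurrent with $\omega_f(x_0)=\omega_f(q)$, strong invariance of $\omega_f(q)$ forces $q$ to be periodic, so $\omega_f(q)$ is finite, contradicting $[a,b]\subseteq\omega_f(q)$. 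Now suppose $f(a)=f(b)=p$ with $a\ne b$ and $f$ not constant on $[a,b]$. Pick an endpoint $e^*\ne p$ of the subdendrite $L=f([a,b])$ and $w\in(a,b)$ with $f(w)=e^*$; then $f([a,w])$ and $f([w,b])$ are subcontinua each containing $p$ and $e^*$, hence the arc $[p,e^*]$, so for $y\in(p,e^*)$ near $e^*$ there are $a'\in(a,w)$, $b'\in(w,b)$ with $f(a')=f(b')=y$, $a'\ne b'$, and $\operatorname{diam}[a',b']$ small. Iterating produces nested arcs $[a_n,b_n]\subseteq[a,b]$ shrinking to a point $w_*$, with $f(a_n)=f(b_n)$, $a_n\ne b_n$, and $a_n,b_n$ converging to $w_*$ from its two sides along $[a,b]$. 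Extracting from this tower a point near $w_*$ whose forward orbit cannot return contradicts pointwise-recurrence; this is the technical core of the argument. Hence $f$ is a continuous bijection of the compactum $D$, i.e.\ a homeomorphism.

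\emph{Forward direction: cut points are periodic.} Now $f$ preserves $B(D)$, $E(D)$ and point orders. If $c\in B(D)$, then $O_f(c)\subseteq B(D)$; since $B(D)$ is discrete, $c$ is isolated in $\overline{B(D)}\supseteq\overline{O_f(c)}=\omega_f(c)\ni c$, so the sequence realizing recurrence of $c$ is eventually equal to $c$, i.e.\ $c$ is periodic. If $c$ has order $2$ it lies in the interior of a maximal arc $\alpha=[u,v]$ all of whose interior points have order $2$, with $u,v\in B(D)\cup E(D)$. When $u,v\in B(D)$ both are periodic, so some common iterate fixes $u$ and $v$ and hence maps $\alpha$ onto itself; an orientation-preserving homeomorphism of an arc whose points are all recurrent is the identity (a fixed-point-free subinterval would contain non-recurrent points, using injectivity of $f$ to pass from $f$-recurrence to recurrence of the restriction), so $c$ is periodic. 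When one endpoint, say $v$, is a branch point (periodic of period $q$) and $u\in E(D)$: letting $W$ be the component of $D\setminus\{v\}$ with $\alpha\setminus\{v\}\subseteq W$, if the $f^q$-orbit of $W$ is infinite then the components $f^{kq}(W)$ are distinct, their diameters shrink, $f^{kq}(c)\to v$, and (grouping by residue $\bmod\ q$) $\omega_f(c)\subseteq O_f(v)$, forcing $c$ into $B(D)$—impossible; so $f^{sq}(W)=W$ for some $s$, and one finishes on the invariant subdendrite $\overline{W}$, reducing once more to an arc since $\alpha\setminus\{v\}$ is free. When $u,v\in E(D)$ then $D=\alpha$ is an arc and the claim is classical. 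Thus $D\setminus E(D)\subseteq P(f)$.

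\emph{Main obstacle.} The converse and surjectivity are routine, and once $f$ is known to be a homeomorphism the "cut points periodic" step is a bookkeeping argument combining discreteness of $B(D)$, the shrinking of families of disjoint branches, and the rigidity of pointwise-recurrent interval homeomorphisms. The crux—which I expect to be the heart of the paper—is the injectivity step: showing that a pointwise-recurrent dendrite map folds no arc, and in particular producing the promised non-recurrent point out of the tower of folded arcs $[a_n,b_n]$ constructed above.
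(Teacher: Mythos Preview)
Your proposal has a genuine gap at exactly the place you flag: the injectivity step. You build a tower of folded arcs $[a_n,b_n]$ but never justify why $\operatorname{diam}[a_n,b_n]\to 0$ (choosing $y$ close to $e^*$ does not force $a',b'$ close to $w$ without some local uniqueness you have not established), nor how to extract a non-recurrent point from the tower. Notice also that your injectivity argument makes no use of the hypothesis that $B(D)$ is discrete; the paper uses that hypothesis precisely here, which is a strong hint that a different mechanism is at work.

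The paper's route avoids fold analysis entirely. Two short lemmas carry the weight:
\begin{itemize}
\item[(a)] In any compact system with $R(f)=X$, every periodic point has a \emph{unique} preimage under each $f^n$ (Lemma~\ref{pree}): if $f(q)=p$ with $q\neq f^{n-1}(p)$, then $q$ is recurrent with $\omega_f(q)=O_f(p)$, forcing $q\in O_f(p)$, a contradiction.
\item[(b)] A dendrite map is monotone iff every nondegenerate arc in $f(D)$ contains a point with a unique $f$-preimage (Lemma~\ref{l33}).
\end{itemize}
Given these, the forward direction is short: discreteness of $B(D)$ gives, inside any arc $I$, an open subarc $J$ free of branch points, hence open in $D$. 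Recurrence produces $x\in J$ with $f^m(x)\in J$ and $f^{n+m}(x)\in(x,f^m(x))$; since $(x,f^m(x))$ is itself the component of $D\setminus\{x,f^m(x)\}$ containing that open arc, Theorem~\ref{rc} yields a periodic point $p\in(x,f^m(x))\subset I$. By (a), $p$ has a unique preimage; by (b), $f$ is monotone. The author then invokes his earlier work on monotone dendrite maps (Theorem~\ref{nagh} and its corollary) to conclude that $f$ is a homeomorphism with every cut point periodic. The converse is likewise a one-line citation of Theorem~\ref{nagh}.

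So the crux is not a direct injectivity argument but the pair (a)+(b), which reduces everything to the monotone case already handled in \cite{Nagh}. Your separate case analysis for ``cut points are periodic'' and your direct converse, while plausible, are both subsumed by that citation.
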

\bigskip

Following (\cite{Ar}, Corollary 3.6), for any dendrite $D$, we have
B($D)$ is discrete whenever E($D)$ is closed. Therefore:
\medskip

\begin{cor}\label{c1} Let $f:~D\rightarrow D$ be a dendrite map. If E($D)$ is closed then Theorem \ref{pr1} holds.
\end{cor}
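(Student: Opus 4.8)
The plan is to obtain Corollary \ref{c1} as an immediate consequence of Theorem \ref{pr1}. Since the two statements have the same conclusion, the only thing that needs checking is that the hypothesis ``$E(D)$ is closed'' is at least as strong as the hypothesis ``$B(D)$ is discrete'' appearing in Theorem \ref{pr1}. This is exactly the structural fact recalled just before the corollary: following (\cite{Ar}, Corollary 3.6), in any dendrite $D$ one has that $B(D)$ is discrete whenever $E(D)$ is closed. Granting this, Theorem \ref{pr1} applies verbatim to the dendrite map $f:D\to D$ and gives that $f$ is pointwise-recurrent if and only if $f$ is a homeomorphism with every cut point periodic, which is precisely the assertion of the corollary.

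For the reader's convenience I would also indicate the mechanism behind ``$E(D)$ closed $\Rightarrow B(D)$ discrete'', arguing by contraposition; the full topological details are in \cite{Ar}. Suppose $B(D)$ is not discrete, so some branch point $b$ is a limit of pairwise distinct branch points $b_n$. Passing to a subsequence we may assume all $b_n$ lie in one component of $D\setminus\{b\}$; after a further Ramsey-type thinning, and using the standard fact that in a dendrite the arc-selection $(x,y)\mapsto[x,y]$ is continuous, we may also assume that the ``genuine side branches'' $S_n$ at $b_n$ (a component of $D\setminus\{b_n\}$ missing $b$ and all the other $b_m$) have pairwise disjoint closures, each meeting the fixed arc $[b,b_1]$ only at $b_n$. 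Local connectedness of $D$ now forces $\mathrm{diam}\,\overline{S_n}\to 0$: otherwise the $\overline{S_n}$ would be infinitely many pairwise disjoint subcontinua of diameter bounded below, all dangling from points $b_n\to b$, which precludes small connected neighbourhoods of $b$ (a comb-space phenomenon). Choosing in each $\overline{S_n}$ an endpoint $e_n$ of $D$ with $e_n\neq b_n$ — the closure of any component of $D\setminus\{p\}$ always contains such an endpoint of $D$, since off that component $D$ is attached only at $p$ — we get $e_n\to b$ while $b\notin E(D)$, so $E(D)$ is not closed.

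The corollary itself therefore carries no real difficulty: the entire substance lies in Theorem \ref{pr1}, together with the single structural input (\cite{Ar}, Corollary 3.6). If one insisted on reproving that input, the only genuine obstacle would be the local-connectedness argument sketched above, namely that side branches at accumulating branch points must shrink in diameter.
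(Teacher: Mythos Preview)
Your argument is correct and matches the paper's own reasoning exactly: the corollary is deduced immediately from Theorem~\ref{pr1} together with the cited fact (\cite{Ar}, Corollary~3.6) that $E(D)$ closed implies $B(D)$ discrete. The additional contrapositive sketch you include is more than the paper provides, but it is a reasonable optional supplement and does not alter the approach.
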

\medskip

\begin{cor}\label{c33} If B($D)$ is discrete and $f:~D\rightarrow D$ is pointwise-recurrent dendrite map then every endpoint of
$D$ is regularly recurrent.
\end{cor}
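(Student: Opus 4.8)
The plan is to reduce everything to Theorem~\ref{pr1} together with one topological fact about endpoints of dendrites. Since $B(D)$ is discrete and $f$ is pointwise-recurrent, Theorem~\ref{pr1} tells us that $f$ is a homeomorphism and that every cut point of $D$ is periodic. Fix an endpoint $x\in E(D)$ and $\varepsilon>0$; it then suffices to produce $N\in\mathbb{N}$ with $d\bigl(x,f^{kN}(x)\bigr)<\varepsilon$ for every $k\in\mathbb{N}$, which is exactly the definition of regular recurrence of $x$.

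The heart of the matter is the claim that $x$ has arbitrarily small neighborhoods bounded by a single order-$2$ cut point: for each $\varepsilon>0$ there is a cut point $c$ with $D\setminus\{c\}$ having exactly two components and such that the component $U$ of $D\setminus\{c\}$ containing $x$ satisfies $\operatorname{diam}\overline{U}<\varepsilon$. To prove this, pick $p\neq x$ and cut points $c_k\in(x,p)$ with $c_k\to x$ and $c_{k+1}\in(x,c_k)$, and let $U_k$ be the component of $D\setminus\{c_k\}$ containing $x$; one checks that $\overline{U_k}=U_k\cup\{c_k\}$ and that the sets $\overline{U_k}$ are nested, since $y\in U_{k+1}$ forces $c_{k+1}\notin[x,y]$, hence $c_k\notin[x,y]$ (because $c_{k+1}\in(x,c_k)$), hence $y\in U_k$. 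So $K:=\bigcap_k\overline{U_k}$ is a subcontinuum containing $x$. I claim $K=\{x\}$: if $y\in K$ with $y\neq x$, then from $y\in\overline{U_k}=U_k\cup\{c_k\}$ and $c_k\to x\neq y$ we get $y\in U_k$ for all large $k$, so $c_k\notin[x,y]$; but since $x$ is an endpoint, $[x,y]\cap[x,p]$ is an arc $[x,m]$ with $m\neq x$ (otherwise $[x,y]\cup[x,p]$ would be an arc through $x$, forcing $x$ to separate $y$ from $p$, impossible for an endpoint), so $c_k\in(x,m)\subseteq[x,y]$ for all large $k$ --- a contradiction. Hence $K=\{x\}$ and $\operatorname{diam}\overline{U_k}\to0$; fix $k$ with $\operatorname{diam}\overline{U_k}<\varepsilon$. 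Finally, $B(D)\cap(x,c_k)$ is countable because $B(D)$ is discrete, so we may choose $c\in(x,c_k)\setminus B(D)$ --- an order-$2$ cut point --- and the component $U$ of $D\setminus\{c\}$ containing $x$ is contained in $U_k$, whence $\operatorname{diam}\overline{U}<\varepsilon$.

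Granting the claim, the dynamical step is immediate. The cut point $c$ is periodic by Theorem~\ref{pr1}; let $n$ be its period and put $h=f^{2n}$. Since $f^n(c)=c$ and $f^n$ is a homeomorphism of $D$, $f^n$ permutes the two components of $D\setminus\{c\}$, so $h=f^{2n}$ fixes each of them; in particular $h(U)=U$. As $x\in U$, this gives $h^k(x)\in U$ for every $k\in\mathbb{Z}_+$, and therefore $d\bigl(x,f^{2nk}(x)\bigr)\le\operatorname{diam}\overline{U}<\varepsilon$ for all $k$. Taking $N=2n$ shows that $x$ is regularly recurrent (the same argument trivially covers the case in which $x$ is itself periodic); since $x\in E(D)$ and $\varepsilon>0$ were arbitrary, every endpoint of $D$ is regularly recurrent.

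I expect the only real obstacle to be the topological claim of the second paragraph --- producing, around an arbitrary endpoint, arbitrarily small neighborhoods whose boundary in $D$ is a single order-$2$ cut point --- and, within it, the place where discreteness of $B(D)$ is used to force that separating point to have order exactly $2$, which is precisely what makes the passage from $f^n$ to $f^{2n}$ enough. Everything else follows quickly from Theorem~\ref{pr1}.
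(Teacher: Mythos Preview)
Your argument is correct, and it proceeds along a genuinely different path from the paper's own proof. The paper simply observes that, by Theorem~\ref{pr1}, every point of an open arc $(p,e)$ with $e\in E(D)$ is periodic, so one may choose periodic points $p_n\to e$ with $p_{n+1}\in(p_n,p_{n+2})$ and then invoke Lemma~\ref{l27} directly; that lemma (already established for use in Theorem~\ref{pr2}) does all the work of showing that the $f^{N_{n+1}}$-orbit of the limit stays inside the shrinking region $V_n$.

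By contrast, you bypass Lemma~\ref{l27} entirely and give a self-contained argument: you manufacture, for each $\varepsilon>0$, a single order-$2$ cut point $c$ whose ``small side'' $U\ni x$ has diameter $<\varepsilon$, and then use that $f^{2n}$ (with $n$ the period of $c$) must fix each of the two components of $D\setminus\{c\}$. This is where your use of the discreteness of $B(D)$ is sharper than the paper's: the paper only needs periodicity of arbitrary cut points near $e$, while you need one such cut point of order exactly $2$, so that a bounded power of $f$ is guaranteed to stabilize $U$. Your topological claim that $\bigcap_k\overline{U_k}=\{x\}$ is essentially a one-sided version of Lemma~\ref{den} specialized to an endpoint, and the justification you give (via $[x,y]\cap[x,p]=[x,m]$ with $m\neq x$) is clean. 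The trade-off: the paper's route is shorter because it amortizes the cost through Lemma~\ref{l27}, whereas your route is more elementary and makes the role of the hypothesis ``$B(D)$ discrete'' more transparent in the proof of this particular corollary.
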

\medskip

\begin{rem} \rm{If $E(D)$ is closed and $f:D\rightarrow D$ is
a pointwise-recurrent dendrite map, then an endpoint of $D$ may not
be in general periodic (see an example by Efremova and Makhrova in
\cite{Ef1} on Gehman dendrite).}\end{rem}
\medskip

\begin{thm}\label{pr2} Let $f:D\rightarrow D$ be a dendrite map. If E($D)$ is countable then $f$ is
pointwise-recurrent if and only if $f$ is pointwise-periodic
homeomorphism.
\end{thm}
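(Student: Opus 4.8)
The backward implication is immediate, since periodic points are recurrent. So assume $f$ is pointwise-recurrent and $E(D)$ is countable; we must show $f$ is a pointwise-periodic homeomorphism.

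I would first reduce this to a single claim: \emph{every point of $D$ is periodic}. Indeed, a pointwise-periodic dendrite map is automatically a homeomorphism: it is onto because each $x$, being recurrent, lies in $\omega_f(x)\subseteq\overline{f(D)}=f(D)$; and it is one-to-one, for if $f(a)=f(b)$ with $a\neq b$ then $O_f(a)$ and $O_f(b)$ share a tail, so writing $p,q$ for the prime periods of $a,b$ one gets $a=f^{p\bmod q}(b)$, and applying $f$ gives $q\mid (p\bmod q)$, whence $p\bmod q=0$ and $a=b$, a contradiction. Thus it suffices to prove that pointwise-recurrence and $E(D)$ countable force $f$ to be pointwise-periodic.

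\textbf{Plan.} The argument would run in three stages. (a) \emph{$f$ is a homeomorphism.} I would show that a pointwise-recurrent dendrite map can neither collapse a non-degenerate subcontinuum nor fold an arc: in either case there is a non-empty (hence infinite) open set $U\subseteq D$ on which $f$ acts so that all points of $U$ have the same $\omega$-limit set, and then only finitely many points of $U$ can be recurrent, contradicting pointwise-recurrence; together with surjectivity this makes $f$ a homeomorphism. (b) \emph{$f$ has no infinite minimal set} — this is where $E(D)$ countable enters. An infinite minimal set $M$ of a dendrite homeomorphism is totally disconnected (a non-degenerate minimal subcontinuum would contradict the fixed-point property of dendrites), hence a Cantor set, and each of its points is an endpoint of the subdendrite $\langle M\rangle$ it generates. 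Since $E(D)$ is countable, uncountably many $x\in M$ are cut points of $D$; for each such $x$ pick the component $C_x$ of $D\setminus\{x\}$ disjoint from $\langle M\rangle\setminus\{x\}$. The $C_x$ are non-empty open sets, and they are pairwise disjoint, since the first-point retraction of $D$ onto $\langle M\rangle$ would send a common point of $C_x$ and $C_{x'}$ to both $x$ and $x'$. As $D$ is separable, only countably many such sets can exist — a contradiction. (c) \emph{Every point is periodic.} I would show that every $\omega$-limit set is minimal (equivalently, the orbit of a recurrent point cannot accumulate on a minimal set disjoint from it); then for recurrent $x$ the set $\omega_f(x)=\overline{O_f(x)}$ is minimal, hence by (b) finite, and a recurrent point with a finite $\omega$-limit set is periodic. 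This gives pointwise-periodicity, and by the reduction above $f$ is a pointwise-periodic homeomorphism.

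\textbf{Main obstacle.} The genuinely dynamical steps are (a) and the minimality assertion in (c) — the dendrite analogues of the hard direction of Mai's theorem for graphs. Here I expect to exploit that $E(D)$, being a countable $G_\delta$ subset of the compact space $D$, is \emph{scattered} and so carries a Cantor--Bendixson rank; an induction on this rank would peel off at each stage the $f$-invariant set of currently isolated endpoints together with the subdendrites they span, reducing each layer to a situation with $B(D)$ discrete, where Theorem~\ref{pr1} and Corollary~\ref{c33} apply (the latter already yielding ``every endpoint is regularly recurrent''), after which stage (b) upgrades regular recurrence to periodicity. A further technical ingredient is the dendrite-theoretic fact used in (b), that the points of an infinite minimal set are endpoints of the subdendrite they generate.
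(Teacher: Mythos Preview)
Your outline diverges substantially from the paper's argument, and the divergence matters because your step~(c) is where the real gap lies. You assert that for a pointwise-recurrent dendrite homeomorphism every $\omega$-limit set is minimal, but offer no proof beyond a vague Cantor--Bendixson induction on $E(D)$; it is unclear how peeling off isolated endpoints would reduce the problem to one with discrete $B(D)$, and even then Theorem~\ref{pr1} gives periodicity of cut points of that smaller dendrite, not minimality of $\omega_f(x)$ for an arbitrary $x\in D$. Your sketch for~(a) is also off: a fold $f(a)=f(b)$ does not by itself produce an open set on which all $\omega$-limit sets coincide. (In fact (a) is cheap once Theorem~\ref{t41} gives $\overline{P(f)}=D$: since $E(D)\cup B(D)$ is countable, every arc in $D$ has nonempty interior in $D$, hence contains a periodic point; that point has a unique preimage by Lemma~\ref{pree}, and Lemma~\ref{l33} then makes $f$ monotone.) Your step~(b) is essentially correct and is a nice observation, but without~(c) it only re-derives Lemma~\ref{l26}.

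The paper sidesteps~(c) entirely. Instead of proving every $\omega$-limit set minimal, it shows via Lemmas~\ref{l27} and~\ref{l43} that the limit of any arc-monotone sequence of periodic points is itself periodic: such a limit is regularly recurrent, so \emph{its} $\omega$-limit set is minimal, and Lemma~\ref{l26} then forces it to be a periodic orbit. From this, the existence of a single non-periodic point forces a non-periodic \emph{branch} point on every arc issuing from it, and iterating this builds a binary tree of non-periodic branch points whose closure is a subdendrite with uncountably many endpoints --- contradicting the fact (proved just before the proof of Theorem~\ref{pr2}) that subdendrites of $D$ inherit countable endpoint sets. Thus the paper trades your global minimality claim for a purely local one about monotone limits of periodic points, which is exactly what the preparatory lemmas supply.
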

\medskip

\begin{cor}\label{c2}\cite{Ma1} Let $T$ be a tree and $f: T\rightarrow T$ a continuous map. If $f$ is pointwise-recurrent then $f$ is periodic.
\end{cor}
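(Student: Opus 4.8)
The plan is to deduce this from Theorem~\ref{pr2} together with the elementary combinatorial structure of trees. A tree $T$ has, by definition, a finite (hence countable) set of endpoints, so Theorem~\ref{pr2} applies: if $f$ is pointwise-recurrent, then $f$ is a pointwise-periodic homeomorphism of $T$. It therefore remains only to show that a pointwise-periodic homeomorphism of a tree is periodic, i.e.\ that $f^{N}=\mathrm{id}_{T}$ for some $N\in\mathbb{N}$.

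First I would use that a (nondegenerate) tree is homeomorphic to a finite one-dimensional CW-complex: the set $V:=E(T)\cup B(T)$ of endpoints and branch points is finite, $T\setminus V$ has finitely many connected components, and the closure of each such component is an arc (an \emph{edge}) whose two endpoints lie in $V$; these edges cover $T$. Since the order of a point is a topological invariant, the homeomorphism $f$ maps $E(T)$ onto $E(T)$ and $B(T)$ onto $B(T)$, hence permutes the finite set $V$, and therefore permutes the finite set of edges. I would then choose $N\in\mathbb{N}$ so that $f^{N}$ fixes every point of $V$ and maps every edge onto itself. For such an $N$, the restriction of $f^{N}$ to an edge $e=[a,b]$ is a self-homeomorphism of $e$ fixing both endpoints $a$ and $b$.

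Next, identifying $e$ with $[0,1]$ so that $a\mapsto 0$ and $b\mapsto 1$, the map $f^{N}|_{e}$ is a homeomorphism of $[0,1]$ onto itself fixing $0$ and $1$, hence strictly increasing. If $f^{N}(x)>x$ for some $x\in(0,1)$, then $f^{kN}(x)>x$ for all $k\geq 1$, so $x$ is not periodic for $f^{N}$ — contradicting that $x$, being periodic for $f$, is periodic for $f^{N}$; the inequality $f^{N}(x)<x$ is excluded symmetrically. Thus $f^{N}|_{e}=\mathrm{id}_{e}$ for every edge $e$, and since the edges cover $T$ we conclude $f^{N}=\mathrm{id}_{T}$, i.e.\ $f$ is periodic.

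The argument is essentially bookkeeping once Theorem~\ref{pr2} is in hand, and I do not expect a genuine obstacle; the one point that requires a little care is the choice of $N$, which must simultaneously make $f^{N}$ fix all of $V$ pointwise \emph{and} preserve each edge. This is exactly what guarantees that $f^{N}$ acts on each edge as an orientation-preserving self-homeomorphism fixing both endpoints, so that pointwise-periodicity can be invoked to force it to be the identity there.
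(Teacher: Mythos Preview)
Your proof is correct and follows essentially the same route as the paper: apply Theorem~\ref{pr2} to get a pointwise-periodic homeomorphism, pass to an iterate that fixes a finite vertex set, and then use that a pointwise-periodic orientation-preserving self-homeomorphism of an arc fixing both endpoints is the identity.

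The only difference is the choice of covering arcs. You work with the CW structure $V=E(T)\cup B(T)$ and the edges between adjacent vertices, which requires also tracking the induced permutation on edges. The paper instead takes $N$ so that $f^{N}$ fixes just the finitely many endpoints of $T$, and then uses directly that for any $a,b\in E(T)$ the unique arc $[a,b]$ is $f^{N}$-invariant (since $f^{N}$ is a homeomorphism and arcs between two points in a tree are unique); as $T=\bigcup_{a,b\in E(T)}[a,b]$, this already covers $T$. This shortcut avoids mentioning branch points or edges at all. Incidentally, in your argument the extra care about $f^{N}$ preserving each edge is automatic once $f^{N}$ fixes $V$ pointwise, again by uniqueness of arcs in a tree, so your choice of $N$ can simply be one fixing $V$.
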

\medskip

Recall that the map $f$ is periodic if $f^{n} = \textrm{id}_T$ for
some $n\in\mathbb{N}$.
\section{\bf Preliminaries}

\begin{lem}\label{l2}\rm{(\cite{Ma2}, Lemma 2.3)} Let $(C_{i})_{i\in\mathbb{N}}$ be a sequence of connected subsets of
a dendrite $(D,d)$. If $C_{i}\cap C_{j} = \emptyset$ for all $i\neq
j$, then $$\lim_{n\to +\infty}\mathrm{diam}(C_{n})=0.$$
\end{lem}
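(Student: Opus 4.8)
The plan is a proof by contradiction that exploits the compactness of $D$ together with the elementary fact that every interior point of an arc is a cut point of the dendrite separating the arc's endpoints.

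First I would suppose that $\mathrm{diam}(C_n)$ does not converge to $0$. Then there are $\varepsilon>0$ and infinitely many indices $n$ with $\mathrm{diam}(C_n)>\varepsilon$, and for each such $n$ one can pick $x_n,y_n\in C_n$ with $d(x_n,y_n)>\varepsilon$. Since $C_n$ is connected it is arcwise connected, so it contains an arc from $x_n$ to $y_n$; by uniqueness of arcs in $D$ this arc is $[x_n,y_n]$, whence $[x_n,y_n]\subset C_n$. Thus the arcs $[x_n,y_n]$ (over these $n$) are pairwise disjoint and all have diameter $>\varepsilon$. By compactness of $D$, after passing to a subsequence we get $x_n\to a$ and $y_n\to b$ with $d(a,b)\geq\varepsilon>0$, so $a\neq b$ and the arc $[a,b]$ is nondegenerate.

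Next I would fix a point $w\in(a,b)$ and check that $\{w\}$ separates $a$ from $b$ in $D$: if $D\setminus\{w\}$ were connected, or if $a$ and $b$ lay in the same component of it, arcwise connectedness would yield an arc from $a$ to $b$ avoiding $w$, contradicting that the unique arc $[a,b]$ passes through $w$. Let $A$ and $B$ be the distinct components of $D\setminus\{w\}$ containing $a$ and $b$ respectively; since $D$ is locally connected, $A$ and $B$ are open. For all sufficiently large $n$ we have $x_n\in A$ and $y_n\in B$, so the connected set $[x_n,y_n]$ meets the two disjoint open sets $A$ and $B$; as $D\setminus\{w\}$ is the disjoint union of its (open) components, $[x_n,y_n]$ cannot be contained in $D\setminus\{w\}$, i.e. $w\in[x_n,y_n]\subset C_n$. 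Since this holds for infinitely many $n$, at least two of the sets $C_n$ share the point $w$, contradicting pairwise disjointness.

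The only step requiring care, rather than genuine work, is the cut-point claim for interior points of arcs; everything else — arcwise connectedness of connected subsets, uniqueness of arcs, local connectedness, and compactness — is quoted directly from the basic properties of dendrites recalled in the introduction.
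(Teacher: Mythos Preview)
Your argument is correct. The paper does not supply its own proof of this lemma; it merely quotes it from \cite{Ma2} (Lemma~2.3 there), so there is no in-paper proof to compare against. Your contradiction via compactness and the cut-point property of interior arc points is a clean, self-contained justification: the passage from connected $C_n$ to the inclusion $[x_n,y_n]\subset C_n$ uses exactly the arcwise connectedness of connected subsets and uniqueness of arcs recalled in the introduction, and the separation argument for $w\in(a,b)$ is the standard one. Nothing is missing.
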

\medskip

\begin{lem}\label{den} Let $D$ be a dendrite and $(p_{n})_{n\in\mathbb{N}}$ be a sequence of $D$ such that
$p_{n+1}\in(p_{n},p_{n+2})$ for all $n\in\mathbb{N}$, and
$\underset{n\to +\infty}\lim p_{n}=p_{\infty}$. Let $U_{n}$ be the
connected component of $D\setminus\{p_{n},p_{\infty}\}$ that
contains the open arc $(p_{n},p_{\infty})$. Then $\underset{n\to
+\infty}\lim \mathrm{diam}(U_{n})=0$
\end{lem}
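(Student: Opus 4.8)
The plan is to reduce the lemma to two ingredients: (i) the hypothesis forces the $p_n$ to lie monotonically along the arc $[p_n,p_\infty]$ and to converge to $p_\infty$ as one travels along that arc; (ii) each $U_n$ is the preimage, under a single fixed retraction $r$, of an open sub‑arc that collapses to $p_\infty$. Granting these, the conclusion follows since a decreasing sequence of subcontinua of $D$ whose intersection is a single point has diameters tending to $0$.

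First I would set up the skeleton. From $p_{n+1}\in(p_n,p_{n+2})$ we get $[p_n,p_{n+2}]=[p_n,p_{n+1}]\cup[p_{n+1},p_{n+2}]$ with the two arcs meeting only in $p_{n+1}$, and a straightforward induction upgrades this to $[p_n,p_m]=\bigcup_{j=n}^{m-1}[p_j,p_{j+1}]$ for $m>n$, with consecutive arcs meeting in one point and non‑consecutive ones disjoint; in particular the $p_n$ are pairwise distinct and $[p_n,p_m]\subsetneq[p_n,p_{m+1}]$. Hence $R_n:=\bigcup_{m>n}[p_n,p_m]$ is homeomorphic to $[0,1)$, so $\overline{R_n}$ is a subcontinuum of $D$ (a dendrite) in which this half‑open arc is dense. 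The one delicate point is to see that $\overline{R_n}$ is itself an arc: otherwise it would have a branch point $b$, so $\overline{R_n}\setminus\{b\}$ would have at least three components, each open in $\overline{R_n}$; but $R_n\setminus\{b\}$ is a union of at most two connected sets and is dense in $\overline{R_n}\setminus\{b\}$, so it would have to meet each of those three components — impossible. Thus $\overline{R_n}$ has at most two endpoints and is an arc, with endpoints $p_n$ and $\lim_m p_m=p_\infty$. In particular $p_n\neq p_\infty$, $\overline{R_n}=[p_n,p_\infty]$, and $p_n,p_{n+1},p_{n+2},\dots$ lie on $[p_n,p_\infty]$ in that order, converging to $p_\infty$. (Lemma~\ref{l2}, applied to the pairwise disjoint open arcs $(p_j,p_{j+1})$, additionally records $\mathrm{diam}([p_j,p_{j+1}])\to0$, which I will not need.)

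Next I would fix $N$ and let $r\colon D\to A:=[p_N,p_\infty]$ be the first‑point (monotone) retraction of $D$ onto the subcontinuum $A$ (Nadler \cite{Nadler}), so that $r(x)\in[x,a]$ for every $a\in A$ and $[x,r(x)]\cap A=\{r(x)\}$. For $n\ge N$ the open sub‑arc of $A$ from $p_n$ to $p_\infty$ coincides, by uniqueness of arcs, with the open arc $(p_n,p_\infty)$ of $D$, and this lies in $U_n$ by definition of $U_n$. I would then prove $U_n=r^{-1}\big((p_n,p_\infty)\big)$: if $r(x)\in(p_n,p_\infty)$ then $[x,r(x)]$ meets $A$ only in $r(x)$, hence avoids $\{p_n,p_\infty\}$ and joins $x$ to a point of $U_n$, so $x\in U_n$; conversely $r(U_n)$ is connected, contains $p_{n+1}$, and cannot meet $\{p_n,p_\infty\}$, since $r(x)\in\{p_n,p_\infty\}$ would place that point on $[x,p_{n+1}]$, contradicting that $x$ and $p_{n+1}$ lie in one component of $D\setminus\{p_n,p_\infty\}$. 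Because the sub‑arcs $(p_n,p_\infty)\subseteq A$ decrease with $n$, so do the $U_n$.

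Finally I would collapse the diameters. As $U_n$ is a component of the open set $D\setminus\{p_n,p_\infty\}$, it is open in $D$ with $\overline{U_n}\subseteq U_n\cup\{p_n,p_\infty\}$, while $p_\infty\in\overline{(p_n,p_\infty)}\subseteq\overline{U_n}$. If $x\in\bigcap_{n\ge N}\overline{U_n}$ and $x\neq p_\infty$, then $x\neq p_n$ for all large $n$ (the $p_n$ being distinct and tending to $p_\infty$), so $x\in\overline{U_n}\setminus\{p_n,p_\infty\}\subseteq U_n=r^{-1}((p_n,p_\infty))$ for all large $n$, whence $r(x)\in\bigcap_n(p_n,p_\infty)=\emptyset$ (the left endpoints $p_n$ converge to $p_\infty$) — a contradiction. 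Therefore $\bigcap_{n\ge N}\overline{U_n}=\{p_\infty\}$, and a decreasing sequence of compacta with one‑point intersection has diameters tending to $0$; since $\mathrm{diam}(U_n)=\mathrm{diam}(\overline{U_n})$, this gives $\lim_{n\to\infty}\mathrm{diam}(U_n)=0$. The genuine obstacle in the whole argument is the claim that $\overline{R_n}$ is an arc — equivalently, that $p_\infty$ cannot lie strictly between two consecutive $p_n$'s nor coincide with any $p_n$; once that is secured, the remainder is routine bookkeeping with the retraction $r$.
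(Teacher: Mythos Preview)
Your proof is correct and takes a genuinely different route from the paper's. The paper argues by contradiction: assuming $\mathrm{diam}(U_n)>\delta$ for all $n$, it manufactures, along a cofinal sequence of indices, ``side branch'' arcs $I_{n_i}\subset U_{n_i}$ of diameter at least $\delta/3$ that are pairwise disjoint, and then invokes Lemma~\ref{l2}. Your argument is direct: after establishing that the $p_m$ lie monotonically on the arc $[p_N,p_\infty]$, you identify $U_n=r^{-1}\big((p_n,p_\infty)\big)$ via the first--point retraction $r$ onto that arc, deduce that the $U_n$ are nested, show $\bigcap_n\overline{U_n}=\{p_\infty\}$, and finish with the elementary fact that a decreasing sequence of compacta with one--point intersection has diameters tending to $0$. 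Your approach is more structural --- the retraction description of $U_n$ makes the inclusion $U_{n+1}\subset U_n$ transparent (the paper simply asserts this at the outset) and could be reused elsewhere --- whereas the paper's approach is more self--contained, staying entirely within the ambit of Lemma~\ref{l2} and avoiding the retraction machinery. Your careful verification that $\overline{R_n}$ is genuinely an arc with endpoints $p_n$ and $p_\infty$ also fills in a point the paper leaves implicit.
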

\medskip

\begin{proof} It is easy to see that $U_{n+1}\subset U_{n}$ for all $n\in\mathbb{N}$. Suppose that Lemma \ref{den} is not true, then there is $\delta>0$ such that for
all $n\in\mathbb{N}$, diam($U_{n})>\delta$. We will construct an
infinite sequence $(I_{n_{i}})_{i\in\mathbb{N}}$ of pairwise
disjoint arcs such that $diam(I_{n_{i}})\geq\frac{\delta}{3}$ for
all $i\in\mathbb{N}$ which contradicts Lemma \ref{l2}: Take
$n_{0}\in\mathbb{N}$ such that for all $n\geq n_{0}$,
$diam([p_{n},p_{\infty}])<\frac{\delta}{3}$. For an integer $n\geq
n_{0}$, let $a_{n},b_{n}\in U_{n}$ be such that
$d(a_{n},b_{n})>\delta$. There exist $c_{n},d_{n}\in
(p_{n},p_{\infty})$ such that $[a_{n},c_{n}]\cap
[p_{n},p_{\infty}]=\{c_{n}\}$ and $[b_{n},d_{n}]\cap
[p_{n},p_{\infty}]=\{d_{n}\}$. As
$d(p_{n},p_{\infty})<\frac{\delta}{3}$, we have either
$d(c_{n},a_{n})>\frac{\delta}{3}$ or
$d(d_{n},b_{n})>\frac{\delta}{3}$. So we let $I_{n}=[c_{n},a_{n}]$
if $d(c_{n},a_{n})>\frac{\delta}{3}$ and $I_{n}=[d_{n},b_{n}]$ if
$d(d_{n},b_{n})>\frac{\delta}{3}$. Choose an integer $m>n$ such that
$p_{m}\in (c_{n},p_{\infty})$ if $I_{n}=[a_{n},c_{n}]$ and $p_{m}\in
(d_{n},p_{\infty})$ if $I_{n}=[b_{n},d_{n}]$. Then $I_{n}\cap
U_{m}=\emptyset$: Indeed, otherwise there exists $z\in I_{n}\cap
U_{m}$. Take $I_{n}=[a_{n},c_{n}]$. As $c_{n}\notin U_{m}$ and $z\in
U_{}$, then $[z,c_{n}]$ contains $p_{n}$ or $p_{\infty}$. In either
cases, $[z,c_{n}]\supset [c_{n},p_{n}]\neq \{c_{n}\}$, a
contradiction. By the same way as $I_{n}$, we obtain an arc
$I_{m}\subset U_{m}$ such that $I_{m}$ intersect the arc
$[p_{m},p_{\infty}]$ in a single point and has diameter greater than
$\frac{\delta}{3}$. So by repeating this process infinitely many
times beginning from $n_{0}$, we obtain an infinite sequence of arcs
$(I_{n_{i}})_{i\in\mathbb{N}}$ with diameter greater than
$\frac{\delta}{3}$  and satisfying the following property: for all
$i\in\mathbb{N}$, $I_{n_{i}}\subset U_{n_{i}}$ and $I_{n_{i}}\cap
U_{n_{i+1}}=\emptyset$. This implies that
$(I_{n_{i}})_{i\in\mathbb{N}}$ are pairwise disjoint, which is our
claim.
\end{proof}
\medskip

\begin{thm}\label{rc}$($\cite{Ma2}, Theorem 2.13$)$ Let $f:D\rightarrow D$ be a dendrite map,
$[x,y]$ be an arc in $D$, and $U$ be the connected component of
$D\setminus \{x,y\}$ containing the open arc $(x, y)$. If there
exist $k,m\in\mathbb{N}$ such that $\{f^{k}(x), f^{m}(y)\}\subset
U$, then $U \cap P(f)\neq\emptyset$.
\end{thm}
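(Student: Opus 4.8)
The plan is to combine the first-point retraction onto $\overline U$ with the fixed point property of dendrites. Recall that $\overline U = U \cup \{x,y\}$ is a subdendrite of $D$; that each component of $D \setminus \overline U$ is a component of $D \setminus \{x,y\}$ different from $U$ whose closure meets $\{x,y\}$ in exactly one point, its \emph{gate}; and that the first-point retraction $r\colon D \to \overline U$ is continuous, restricts to the identity on $\overline U$, and collapses each component of $D \setminus \overline U$ onto its gate. In particular $r(w) \in U$ if and only if $w \in U$.

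First I would isolate the basic mechanism. For $n \in \mathbb N$ let $g_n = r \circ f^{n}|_{\overline U}\colon \overline U \to \overline U$, a dendrite map, so $\mathrm{Fix}(g_n) \neq \emptyset$. If $g_n$ has a fixed point $q \in U$, then $r(f^{n}(q)) = q \in U$ forces $f^{n}(q) \in U$, hence $f^{n}(q) = r(f^{n}(q)) = q$; thus $q \in U \cap P(f^{n}) \subseteq U \cap P(f)$ and we are done. Apply this with $n = k$: since $f^{k}(x) \in U$ we get $g_k(x) = f^{k}(x) \in U$, so $x \notin \mathrm{Fix}(g_k)$ (as $x \notin U$); hence either some fixed point of $g_k$ lies in $U$ (done) or $\mathrm{Fix}(g_k) = \{y\}$, i.e.\ $r(f^{k}(y)) = y$, which means either $f^{k}(y) = y$ or $f^{k}(y) \notin \overline U$ (it lies ``behind'' $y$). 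In the first case $y$ is periodic, of some period $p$, and then $f^{m}(y) = f^{m \bmod p}(y)$ is a periodic point lying in $U$ — done again. Running the same argument with $n = m$ shows: we are done unless, in addition, $f^{m}(x)$ lies behind $x$. Observe that when $k = m$ this already finishes the proof, because then $g_k(y) = f^{k}(y) \in U$ too.

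So it remains to handle the case $f^{k}(x), f^{m}(y) \in U$ while $f^{k}(y)$ lies behind $y$ and $f^{m}(x)$ lies behind $x$. Here one works inside the arc $[x,y]$, using that $f^{n}([x,y]) \supseteq [f^{n}(x), f^{n}(y)]$ for every $n$: since $[f^{k}(x), f^{k}(y)]$ runs from $U$ out past $y$, there is $w \in (x,y)$ with $f^{k}(w) = y$, and similarly $w' \in (x,y)$ with $f^{m}(w') = x$. Combining these ``crossing'' facts with those produced by further iteration, one sets up an intermediate-value / covering argument for the interval map induced (via retraction onto a suitable subarc) by an appropriate iterate $f^{N}$ on that subarc, producing $z \in U$ with $f^{N}(z) = z$, i.e.\ $z \in U \cap P(f)$.

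The hard part is exactly this last step. Off $\overline U$ the map $f$ is uncontrolled, so the work is to choose $N$ and the subarc so that the relevant orbit segments come back into $U$ in a way that genuinely straddles the subarc — and so that the resulting fixed point is forced into $U$ rather than onto $\{x,y\}$. This is where Lemmas \ref{l2} and \ref{den} enter: they keep the arcs carrying the straddling information from shrinking to a point, so that the intermediate-value argument remains effective. (Both hypotheses are genuinely used: on $D = [0,1]$ with $x = \tfrac14$, $y = \tfrac12$, $U = (\tfrac14, \tfrac12)$ and $f(t) = \min(t + \tfrac15, 1)$, the orbit of $x$ enters $U$ although $P(f) = \{1\}$ misses $U$.)
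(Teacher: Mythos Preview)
The paper does not prove this theorem at all: it is quoted from \cite{Ma2} (Theorem~2.13 there) and used as a black box. So there is no ``paper's own proof'' to compare against; I can only assess your argument on its merits.

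Your reduction via the first-point retraction $r\colon D\to\overline U$ and the fixed-point property of dendrites is sound and disposes of the easy cases correctly: if some $g_n=r\circ f^n|_{\overline U}$ has a fixed point in $U$ you are done, and you correctly isolate the residual configuration $f^k(x),f^m(y)\in U$ while $f^k(y)$ lies behind $y$ and $f^m(x)$ lies behind $x$.

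The gap is that you do not actually carry out the ``hard part.'' You only say one ``sets up an intermediate-value / covering argument'' for a suitable iterate $f^N$ on a suitable subarc, without specifying $N$, the subarc, or the covering relations, and without showing why the resulting fixed point lands in $U$ rather than on $\{x,y\}$. This is precisely the substance of the theorem; everything before it is bookkeeping. Moreover, your appeal to Lemmas~\ref{l2} and~\ref{den} is misplaced: those lemmas say that diameters of suitable sequences of connected sets tend to zero, which is the opposite of ``keeping arcs from shrinking,'' and neither lemma plays any role in producing periodic points. (In the paper they are used for entirely different purposes, in Lemma~\ref{l27}.) What is actually needed in the residual case is a concrete combinatorial covering argument---for instance, using the preimages $w\in(x,y)$ with $f^k(w)=y$ and $w'\in(x,y)$ with $f^m(w')=x$ that you found, to build an explicit chain of arc-coverings under a specific iterate and then invoke an Itinerary/IVT lemma on an arc. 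Until that step is written out, the proof is incomplete.
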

\medskip

 We say that a dendrite map $f: D\rightarrow D$ is \textit{monotone} if the preimage of any point by $f$ is connected.
Notice that if $f$ is monotone so is $f^{n}$ for any
$n\in\mathbb{N}$.
\medskip

\begin{thm}$($\cite{Nagh}, Theorem 1.6$)$\label{nagh}
Let $f:D\rightarrow D$ be a dendrite map. If $f$ is monotone then
the following statements are equivalent:
\begin{itemize}
 \item [(i)] $f$ is pointwise-recurrent.
\item [(ii)] $f$ is relatively recurrent.
\item [(iii)] every cut point is a periodic point.
\end{itemize}
\end{thm}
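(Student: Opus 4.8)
The plan is to prove the cycle $(i)\Rightarrow(ii)\Rightarrow(iii)\Rightarrow(i)$. The first implication is immediate, since $R(f)=D$ trivially gives $\overline{R(f)}=D$.

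For $(iii)\Rightarrow(i)$ I would first show that $f$ is a homeomorphism. If $f(x)=f(y)$ with $x\neq y$, then by monotonicity $f^{-1}(f(x))$ is connected and contains $x$ and $y$, hence contains the arc $[x,y]$; every $z\in(x,y)$ is a cut point, so by $(iii)$ it is periodic, say of period $q$, and then $z=f^{q}(z)=f^{q-1}(f(x))$ lies on the countable orbit $O_f(f(x))$ — impossible, since $(x,y)$ is uncountable. Thus $f$ is injective, hence a homeomorphism onto the subdendrite $f(D)$; as $f(D)$ is closed and contains every periodic point while the (periodic) cut points are dense, $f(D)=D$. Now let $e\in E(D)$. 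Since a dendrite has only countably many branch points, I can choose cut points $p_1<p_2<\cdots$ along an arc ending at $e$ with $p_{n+1}\in(p_n,e)$, $p_n\to e$, each $p_n\notin B(D)$ and periodic of period $q_n$. Let $\widehat U_n$ be the component of $D\setminus\{p_n\}$ containing $e$; Lemma \ref{den} gives $\mathrm{diam}(\widehat U_n)\to 0$. Put $N_n=\mathrm{lcm}(q_1,\dots,q_n)$, so the homeomorphism $f^{N_n}$ fixes $p_1,\dots,p_n$. As $p_n\notin B(D)\cup E(D)$, the set $D\setminus\{p_n\}$ has exactly two components, $\widehat U_n$ and the one containing $p_{n-1}$; since $f^{N_n}$ fixes $p_{n-1}$ it fixes that second component, hence fixes $\widehat U_n$, so $f^{kN_n}(\overline{\widehat U_n})=\overline{\widehat U_n}$ for all $k$. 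Because $e\in\widehat U_n$, $d(e,f^{kN_n}(e))\leq\mathrm{diam}(\overline{\widehat U_n})$ for all $k$, and letting $n\to\infty$ shows $e$ is regularly recurrent. Together with the periodicity of cut points this yields $R(f)=D$.

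The core of the theorem is $(ii)\Rightarrow(iii)$. First, $R(f)\subseteq f(D)$ (because $\omega_f(x)=f(\omega_f(x))\subseteq f(D)$), so $\overline{R(f)}=D$ forces $f$ onto. A preliminary fact is that a monotone dendrite map preserves betweenness — $y\in[x,z]\Rightarrow f(y)\in[f(x),f(z)]$, equivalently $f([x,z])=[f(x),f(z)]$ — since otherwise $[f(x),f(y)]$ and $[f(y),f(z)]$ would meet $[f(x),f(z)]$ in subarcs sharing a single point $w$, and as $f^{-1}(w)$ is connected and meets both $[x,y]$ and $[y,z]$, it contains $y$, forcing $f(y)=w$, a contradiction. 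Next one shows $f$ cannot collapse an arc $[x,y]$ with $(x,y)$ open in $D$: if it did (to a point $w$), then the recurrent points dense in $(x,y)$ all lie in $\omega_f(w)$, so $[x,y]\subseteq\omega_f(w)$ and hence $w\in f([x,y])\subseteq f(\omega_f(w))=\omega_f(w)$; thus $w$ is recurrent, some $f^m(w)$ falls in the open set $(x,y)\subseteq f^{-1}(w)$, so $w$ is periodic and $\omega_f(w)$ finite — contradicting $[x,y]\subseteq\omega_f(w)$. With free arcs thus rigid, the plan for a general cut point $c$ is: take recurrent points $a$, $b$ in two components of $D\setminus\{c\}$, so $c\in(a,b)$ and $f^n(c)\in[f^n(a),f^n(b)]$ for all $n$ by betweenness; use the recurrence of $a,b$ together with Theorem \ref{rc}, applied to well-chosen arcs built from iterates of $a$ and $b$, to manufacture periodic points ever nearer $c$; and use Lemma \ref{l2} on the infinitely many ``return components'' at $c$ to force $\omega_f(c)$ to be the finite orbit of $c$, i.e.\ $c\in P(f)$.

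The step I expect to fight hardest with is this last localization. Theorem \ref{rc} only produces a periodic point inside a component that contains an open arc, and when $c$ is an interior cut point no such component shrinks to $c$; moreover the recurrence of a point lying on the boundary of the relevant component does not automatically drive its orbit into the open region that the theorem demands. Making the argument go through means choosing the recurrent points $a,b$ (and the auxiliary arcs) so that their orbits genuinely enter the ``tripod'' around $c$ and so that monotonicity traps the orbit of $c$ in a nested, shrinking family of neighbourhoods of $c$; carrying out that bookkeeping, and handling cut points through which every arc meets $B(D)$ densely so that no free sub-arc is available, is where the real work lies.
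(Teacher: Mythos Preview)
This theorem is not proved in the present paper: it is quoted from \cite{Nagh} as a preliminary result (Theorem~\ref{nagh}) and used without argument, so there is no in-paper proof to compare your sketch against.

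On the substance: the implications $(i)\Rightarrow(ii)$ and $(iii)\Rightarrow(i)$ are correct. Your injectivity argument under $(iii)$ is clean, and the regular-recurrence argument for endpoints via Lemma~\ref{den} and the two-component decomposition at an order-$2$ cut point is sound (it is essentially the mechanism behind Corollary~\ref{c33}).

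The gap is $(ii)\Rightarrow(iii)$, and you rightly flag it as unfinished. Your preliminaries --- surjectivity, $f([x,z])=[f(x),f(z)]$ for monotone $f$, and the non-collapsing of free arcs under relative recurrence --- are correct, but the subsequent ``plan'' is not yet an argument. The obstruction is exactly the one you name: Theorem~\ref{rc} only yields a periodic point in a region $U$ when two specified iterates land inside $U$, and mere recurrence of points $a,b$ on either side of $c$ does not by itself produce such iterates in neighborhoods of $c$ that shrink to $\{c\}$, particularly when $c\in\overline{B(D)}$ so that no free arc abuts $c$. Even if you did manufacture periodic points accumulating on $c$, nothing in your toolkit upgrades that to $c\in P(f)$: Lemma~\ref{l27} would do it, but its hypothesis is \emph{pointwise} recurrence, and its proof leans on Lemma~\ref{pree}, which likewise needs $R(f)=D$. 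So the central implication remains open in your proposal; the missing idea is precisely the content of \cite{Nagh}.
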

\medskip

\begin{lem}\label{l33}
Let $f:D\rightarrow D$ be a dendrite map, then the following
properties are equivalent:
\begin{itemize}
 \item [(i)] any nondegenerate arc $I\subset f(D)$ contains a point with
unique preimage by $f$.
\item[(ii)] the map $f$ is monotone.
\end{itemize}
\end{lem}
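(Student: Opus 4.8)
The plan is to prove both implications directly. For $(ii)\Rightarrow(i)$: if $f$ is monotone and $I=[a,b]\subset f(D)$ is a nondegenerate arc, then $f^{-1}(c)$ is a connected subset (a subcontinuum, possibly a point) of $D$ for every $c\in I$. These preimages are pairwise disjoint as $c$ ranges over $I$, and each is nonempty since $I\subset f(D)$. If every $f^{-1}(c)$, $c\in(a,b)$, were nondegenerate, then $\{f^{-1}(c):c\in(a,b)\}$ would be an uncountable family of pairwise disjoint nondegenerate subcontinua of $D$; but then one could extract a sequence of pairwise disjoint connected sets whose diameters do not tend to $0$ (each nondegenerate subcontinuum has positive diameter, and only countably many can have diameter exceeding $1/n$ for each $n$ by Lemma \ref{l2}), a contradiction once we pick uncountably many with diameter bounded below. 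Hence some $c\in I$ has $f^{-1}(c)$ a single point.

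For the contrapositive of $(i)\Rightarrow(ii)$: suppose $f$ is not monotone, so there is a point $p\in D$ with $f^{-1}(p)$ disconnected. Pick $u,v\in f^{-1}(p)$ lying in different connected components of $f^{-1}(p)$; then the arc $[u,v]$ meets the complement of $f^{-1}(p)$, so there is $w\in(u,v)$ with $f(w)\ne p$. Now $f([u,v])$ is a subcontinuum of $D$ containing $p=f(u)=f(v)$ and the distinct point $f(w)$, hence it contains the nondegenerate arc $[p,f(w)]$. I would then show this arc $I:=[p,f(w)]$ witnesses the failure of $(i)$: every point of $[p,w]$ mapping into $I$ must be compared, and since $f(u)=f(v)=p$ with $u,v$ separated in the fiber, a connectedness/intermediate-value argument on the arc $[u,v]$ produces, for each $c\in I$ close enough to $p$, at least two preimages — one on the $[u,w]$ side and one on the $[w,v]$ side of $w$. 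The key point is that $[u,v]\setminus f^{-1}(p)$ splits into (relatively) open pieces on either side of $f^{-1}(p)\cap[u,v]$, and $f$ restricted to the closure of each piece is an arc-map hitting a whole neighborhood of $p$ in $I$.

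The main obstacle is making the last step precise: from "$f^{-1}(p)$ is disconnected" I need not just two fibers but a whole subarc of $I$ each of whose points has $\geq 2$ preimages, and I must ensure those preimages are genuinely distinct (not accidentally coinciding because the two "sides" of the fiber reconnect elsewhere in $D$). The clean way is to localize: choose $u,v$ and a point $q\in[u,v]$ with $q\notin f^{-1}(p)$ such that $[u,q]$ and $[q,v]$ each meet $f^{-1}(p)$; set $I$ to be a small subarc of $[p,f(q)]$ emanating from $p$. Because $f([u,q])$ and $f([q,v])$ are subcontinua each containing $p$ and $f(q)$, both contain $[p,f(q)]\supset I$; continuity and connectedness of these two arcs force each $c\in I$ to have a preimage in $[u,q]$ and another in $[q,v]$, and these are disjoint closed subarcs of $[u,v]$, so the preimages are distinct. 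Thus no point of $I$ has a unique preimage, contradicting $(i)$, and monotonicity follows.
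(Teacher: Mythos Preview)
Your proof is correct. For $(i)\Rightarrow(ii)$ it is essentially the paper's argument: pick $u,v$ in distinct components of a disconnected fiber $f^{-1}(p)$, find $w\in(u,v)$ with $f(w)\neq p$, and observe that $f([u,w])$ and $f([w,v])$ each contain $[p,f(w)]$; since $[u,w]\cap[w,v]=\{w\}$ and $f(w)\neq c$ for $c\in[p,f(w))$, every such $c$ has two distinct preimages. Your extended worry about preimages ``accidentally coinciding'' is therefore unnecessary---the one-point intersection at $w$ already disposes of it, and the paper handles this in a single line.

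For $(ii)\Rightarrow(i)$ your route genuinely differs from the paper's. The paper invokes a structural result from Nadler writing $D=\bigcup_{n} A_n\cup E(D)$ as countably many arcs with pairwise disjoint interiors, and argues that uncountably many nondegenerate fibers would force uncountably many pairwise disjoint subarcs into a single $A_{n_0}$. You instead appeal to Lemma~\ref{l2}: if every fiber over $(a,b)$ were nondegenerate, then by pigeonhole over the thresholds $1/n$ some fixed $\delta>0$ would have infinitely many fibers of diameter $>\delta$, and any sequence of these contradicts Lemma~\ref{l2}. Your argument is more self-contained (it uses only a lemma already established in the paper rather than importing an external decomposition theorem) and equally short; the paper's version trades that for a cardinality contradiction that does not pass through a diameter estimate. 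Both are clean and valid.
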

\medskip

\begin{proof} $(i)\Longrightarrow (ii)$: If $f$ is not monotone, then there exists
$z\in D$ such that $f^{-1}(z)$ is not connected. So one can find $a,
b\in D$ with $a\neq b$ and  $w_{-1}\in (a,b)$ such that
$f(a)=f(b)=z$ and $w:= f(w_{-1})\neq z$. By continuity of $f$, we
have $[z,w]\subset f([a,w_{-1}])\cap f([b,w_{-1}])\subset f(D)$.
Hence each point in $(z,w)$ has at least two preimages by $f$, a
contradiction.

$(ii)\Longrightarrow (i)$:  Let $I\subset f(D)$ be a nondegenerate
arc. Then $(f^{-1}(\{x\}))_{x\in I}$ is a family of uncountably many
pairwise disjoint connected non-empty subsets of $D$. Suppose that
for every $x\in D$, $f^{-1}(\{x\})$ is not reduced to a point, then
there is a non degenerate arc $I_{x}\subset f^{-1}(\{x\})$
containing no endpoints. By (\cite{Nadler}, Corollary 10.28), $D$
can be written as follow: $D= \cup_{n\in\mathbb{N}}A_{n}\cup E(D)$
where $(A_{n})_{n\in\mathbb{N}}$ is a family of arcs with pairwise
disjoint interiors. Hence, for each arc $I_{x}$, there is
$n(x)\in\mathbb{N}$ such that $I_{x}\cap A_{n(x)}$ is a non
degenerate arc. So necessarily, there is an arc $A_{n_{0}}$
containing an uncountably many pairwise disjoint nondegenerate arcs
of $(I_{x})_{x\in I}$, which is a contradiction.
\end{proof}
\medskip

\begin{lem}\label{pree} Let $X$ be a compact metric space and $f:X\rightarrow X$
is a pointwise-recurrent continuous map. Then every periodic point
of $f$ has a unique pre-image by $f^{n}$ for all $n\in\mathbb{N}$.
\end{lem}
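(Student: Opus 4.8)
The plan is to take a periodic point $p$, say of prime period $m$, and an arbitrary point $q$ with $f^{n}(q)=p$, and to show that $q$ is completely determined by $p$ and $n$. The central observation is that the $f$-orbit of $q$ eventually falls into the finite orbit $O_{f}(p)=\{p,f(p),\dots,f^{m-1}(p)\}$, so that $\omega_{f}(q)$ can be computed exactly.

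First I would note that for every $k\geq n$ we have $f^{k}(q)=f^{k-n}(f^{n}(q))=f^{k-n}(p)$, hence $\{f^{k}(q):k\geq n\}=O_{f}(p)$. Since $p$ has prime period $m$, the set $O_{f}(p)$ is finite, therefore closed, and from the formula $\omega_{f}(q)=\bigcap_{N\in\mathbb{N}}\overline{\{f^{k}(q):k\geq N\}}$ one gets $\omega_{f}(q)=O_{f}(p)$.

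Next I would invoke the hypothesis that $f$ is pointwise-recurrent: $q$ is recurrent, so $q\in\omega_{f}(q)=O_{f}(p)$, which means $q=f^{j}(p)$ for some $j\in\{0,1,\dots,m-1\}$. Then $p=f^{n}(q)=f^{n+j}(p)$, and since $m$ is the prime period of $p$ this forces $m\mid n+j$; as $0\leq j\leq m-1$, the integer $j$ is uniquely determined (namely $j\equiv -n \pmod m$). Consequently $q=f^{j}(p)$ is the \emph{only} point mapped to $p$ by $f^{n}$, i.e. $p$ has a unique pre-image under $f^{n}$. Since $n\in\mathbb{N}$ was arbitrary, this proves the lemma.

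I do not expect a genuine obstacle here: the only point requiring a little care is the identification $\omega_{f}(q)=O_{f}(p)$, which rests on the finiteness of the orbit of a point of finite (prime) period and on the closedness of finite sets in a metric space; everything else is a short arithmetic argument with the period $m$.
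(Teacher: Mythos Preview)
Your proof is correct and follows essentially the same approach as the paper: both observe that any pre-image $q$ of $p$ satisfies $\omega_{f}(q)=O_{f}(p)$, invoke recurrence to get $q\in O_{f}(p)$, and then deduce uniqueness. The only cosmetic difference is that the paper first reduces to the case $n=1$ via the identity $\mathrm{R}(f)=\mathrm{R}(f^{n})$, whereas you treat arbitrary $n$ directly with a short modular-arithmetic step.
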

\medskip

\begin{proof} As $R(f) = R(f^{n})$ for all $n\in\mathbb{N}$ (see \cite{lsB}), it suffices to prove the Lemma for $f$.
Suppose that for some periodic point $p$ of period $n\in\mathbb{N}$,
$f^{-1}(\{p\})$ contains more than one point. So there is $q\neq
f^{n-1}(p)$ such that $f(q)=p$. Since $q$ is recurrent and
$\omega_{f}(q)= O_{f}(p)$, it follows that $q\in O_{f}(p)$, so there
is $k\in\mathbb{N}$ such that $q= f^{k}(p)$. Thus $f^{n-1}(p)=
f^{n}(q)= f^{k}(f^{n}(p)) = f^{k}(p)= q$, a contradiction.
     \end{proof}

\begin{lem}\label{l27} Let $f:D\rightarrow D$ be a dendrite map. Assume that $f$ is a pointwise-recurrent.
Let $(p_{n})_{n\in\mathbb{N}}\subset D$ be a sequence of periodic
points of $f$ such that $p_{n+1}\in(p_{n},p_{n+2})$ for all
$n\in\mathbb{N}$, and $\underset{n\rightarrow +\infty}\lim p_{n}=
p_{\infty}$. Then $p_{\infty}$ is a regularly recurrent point.
\end{lem}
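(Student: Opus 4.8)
The plan is to exploit the nested sequence of connected regions $U_n$ attached to the "funnel" of periodic points $(p_n)$ converging to $p_\infty$, and show that $f$ permutes these regions in a controlled way so that $p_\infty$ returns close to itself along a fixed arithmetic progression of times. Concretely, fix $\varepsilon>0$. By Lemma \ref{den}, the connected components $U_n$ of $D\setminus\{p_n,p_\infty\}$ containing $(p_n,p_\infty)$ satisfy $\mathrm{diam}(U_n)\to 0$, so choose $N$ with $\mathrm{diam}(U_N)<\varepsilon$; note $p_\infty\in\overline{U_N}$, so it suffices to force $f^{kM}(p_\infty)\in \overline{U_N}$ for all $k$ and a suitable period $M$. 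First I would observe that, since $p_N$ and $p_{N+1}$ are periodic with some common period $m$ (take $m$ the lcm of their prime periods), the arc $[p_N,p_{N+1}]$ has the property that $f^m$ fixes both of its endpoints; combined with pointwise recurrence this should pin down the behaviour of $f^m$ on the region between them.

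The key step is to analyze how $f$ (or a fixed iterate of it) acts on the collection $\{U_n\}$. Since each $p_n$ is periodic, $p_\infty=\lim p_n$ has $\omega_f(p_\infty)$ contained in the closure of $\bigcup_n O_f(p_n)$, and more usefully, by recurrence of $p_\infty$ there are times $n_i\to\infty$ with $f^{n_i}(p_\infty)\to p_\infty$. I would argue that for large $i$, $f^{n_i}$ must map some $U_n$ (with $n$ large) into a region close to $p_\infty$; the point is to upgrade the *existence* of such return times to a *syndetic/arithmetic* structure. The cleanest route: show directly that $p_\infty$ is almost periodic and that its orbit closure $\omega_f(p_\infty)$ is a minimal set on which $f$ acts as an adding-machine-type (odometer) map — this is exactly the situation Theorem \ref{rc} and the funnel structure suggest, because the nested $U_n$ give a descending sequence of "cylinder" neighbourhoods of $p_\infty$, each of which is eventually mapped over itself under a fixed power of $f$. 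Once one knows $f^{m_N}(\overline{U_N})\subset \overline{U_N}$ for some $m_N$ depending only on $N$ (using that the endpoints $p_N,p_\infty$ are recurrent and that $f$ cannot "spread" $U_N$ out, by Lemma \ref{l2}-type disjointness arguments applied to the iterates of $U_N$), we get $f^{km_N}(p_\infty)\in\overline{U_N}$ for all $k\ge 0$, hence $d(p_\infty,f^{km_N}(p_\infty))\le \mathrm{diam}(\overline{U_N})<\varepsilon$, giving regular recurrence.

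The main obstacle I anticipate is establishing that $f^{m}(\overline{U_N})\subseteq \overline{U_N}$ (or at least $f^{m}(p_\infty)\in\overline{U_N}$) for a period $m$ that does \emph{not} depend on $\varepsilon$ in an uncontrolled way — i.e. that the "local period" at $p_\infty$ stabilizes rather than growing without bound as $N\to\infty$. Handling this requires care: a priori the prime periods of $p_n$ could grow with $n$, so one cannot simply take $m=\mathrm{lcm}$ of all of them. The resolution should be that $f$ maps the arc $[p_n,p_\infty]$ onto an arc $[f(p_n),f(p_\infty)]$, and by recurrence $f(p_\infty)$ stays in the funnel, so $f$ permutes the \emph{germs} of the funnel at $p_\infty$; since $f$ is continuous and the $U_n$ shrink, this permutation action on the nested neighbourhood basis $\{U_n\}$ must eventually be "the identity shift," forcing $f^{m}(p_\infty)$ to lie in every $U_n$ past a point that depends only on $m$ — and then a compactness/pigeonhole argument over the finitely many possible local behaviours yields a single $m$ working for a cofinal set of $N$, which suffices. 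If that germ-permutation argument proves too delicate, the fallback is to invoke Theorem \ref{rc} to produce periodic points inside each $U_n$ and run a more hands-on combinatorial argument on their periods, but I expect the germ approach to be the right level of abstraction.
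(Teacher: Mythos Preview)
Your overall setup --- use the nested regions $U_n$ from Lemma \ref{den} and show that the $f^{m}$-orbit of $p_\infty$ is trapped in $\overline{U_N}$ for a suitable $m$ --- is exactly the right frame, but the proposal then derails on two points. First, the ``main obstacle'' you flag is not an obstacle at all: regular recurrence only requires that for each $\varepsilon>0$ there exist \emph{some} $M=M(\varepsilon)$ with $d(p_\infty,f^{kM}(p_\infty))<\varepsilon$ for all $k$; the period is allowed to grow as $\varepsilon\to 0$. So there is no need for the local period to ``stabilize,'' and the growth of the prime periods of the $p_n$ is irrelevant --- one simply takes $M=N_{n+1}$ (the period of $p_{n+1}$) whenever $\mathrm{diam}(U_n)<\varepsilon$. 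The germ-permutation and odometer discussion is therefore aimed at a non-problem. Second, you try to prove too much: $f^{m}(\overline{U_N})\subseteq\overline{U_N}$ need not hold, and the sketch you give for it (``$f$ cannot spread $U_N$ out, by Lemma \ref{l2}-type disjointness of iterates'') does not go through, since the iterates $f^{j}(U_N)$ are typically not pairwise disjoint.

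What is actually needed, and what you are missing, is only that the single orbit $\{f^{kN_{n+1}}(p_\infty):k\ge 0\}$ remains in $V_n:=U_n\cup\{p_n,p_\infty\}$. The key tool absent from your plan is Lemma \ref{pree}: under pointwise recurrence every periodic point has a \emph{unique} preimage under each iterate. The paper argues by contradiction: if some $f^{m}(p_\infty)$ (with $m=kN_{n+1}$) escaped $V_n$, then the arc from $p_{n+1}$ to $f^{m}(p_\infty)$ must pass through either $p_\infty$ or $p_n$. In the first case, since $f^m$ fixes $p_{n+1}$, one pulls back $p_\infty$ repeatedly inside $(p_{n+1},p_\infty)$ and eventually produces a periodic $p_s$ in that arc with two distinct preimages under some $f^{rm}$, contradicting Lemma \ref{pree}. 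In the second case $p_n$ acquires an $f^m$-preimage $q\in(p_{n+1},p_\infty]$; by Lemma \ref{pree} this $q$ must lie on the orbit of $p_n$, hence $q\in\mathrm{Fix}(f^{N_n})\cap(p_n,p_\infty]$, which one has arranged to be empty after a harmless normalization of the sequence. None of the mechanisms you propose (Theorem \ref{rc}, germ permutations, disjointness of iterates) supplies this step, so the proposal as written has a genuine gap at exactly the point where the work happens.
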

\medskip

\begin{proof}
For $n\in \mathbb{N}$, let $U_{n}$ be defined as in Lemma \ref{den},
let $V_{n}:=U_{n}\cup \{p_{n},p_{\infty}\}$ and let denote by
$N_{n}$ the period of the point $p_{n}$. It is easy to see that if
there is a sub-sequence of $(p_{n})_{n\in\mathbb{N}}$ with bounded
periods then by the continuity of $f$ (and hence the continuity of
its iterated maps), the point $p_{\infty}$ is periodic, in
particular, it is regularly recurrent point. Otherwise, we have to
prove the Lemma 2.6 in the case of $p_{\infty}$ is not periodic.
Then without loss of generality, the sequence
$(p_{n})_{n\in\mathbb{N}}$ can be assumed such that
\begin{equation}\label{e2} (p_{n},p_{\infty}]\cap
\textrm{Fix}(f^{N_{n}}) = \emptyset.\end{equation}

We will prove that for all $n\in\mathbb{N}$, the orbit of the point
$p_{\infty}$ under the map $f^{N_{n+1}}$ is included into the set
$V_{n}$: Indeed, otherwise for some $n\in\mathbb{N}$, there is
$k\in\mathbb{N}$ such that $f^{kN_{n+1}}(p_{\infty})\notin V_{n}$,
so we have two possibilities both of them lead to a contradiction:
$p_{\infty}\in (p_{n+1},f^{kN_{n+1}}(p_{\infty}))$   or
 $p_{n}\in (p_{n+1},f^{kN_{n+1}}(p_{\infty}))$. Let $m:=kN_{n+1}$.

Suppose that $p_{\infty}\in (p_{n+1},f^{m}(p_{\infty}))$. As
$p_{n+1}\in\rm{Fix}(f^{m})$, by the continuity of $f^{m}$, we have
$f^{m}([p_{n+1},p_{\infty}])\supset [p_{n+1},f^{m}(p_{\infty})]\ni
p_{\infty}$. Hence there is a point $p_{\infty,-1}\in
(p_{n+1},p_{\infty})$ such that $f^{m}(p_{\infty,-1})=p_{\infty}$.
Similarly, there is a point $p_{\infty,-2}\in
(p_{n+1},p_{\infty,-1})$ such that
$f^{m}(p_{\infty,-2})=p_{\infty,-1}$. Thus, by induction we
construct a sequence $(p_{\infty,-k})_{k\in\mathbb{N}}$ in
$(p_{n+1},p_{\infty})$ such that for all $k\in\mathbb{N}$,
$p_{\infty,-(k+1)}\in[p_{n+1},p_{\infty,-k}]$  and
\begin{equation}\label{e1}
f^{km}(p_{\infty,-k})=p_{\infty}.
\end{equation}
As $p_{\infty,-1}\neq p_{\infty}$, there is
$p_{s}\in(p_{\infty,-1},p_{\infty})$ for some $s\in\mathbb{N}$. Let
$r\in\mathbb{N}$ be such that $f^{r}(p_{n+1})=p_{n+1}$ and
$f^{r}(p_{s})=p_{s}$. Hence, $f^{rm}(p_{n+1})=p_{n+1}$ and
$f^{rm}(p_{s})=p_{s}$. By (\ref{e1}),
$f^{rm}(p_{\infty,-r})=p_{\infty}$ and by the continuity of
$f^{rm}$, $f^{rm}([p_{n+1},p_{\infty,-r}])\supset
[p_{n+1},p_{\infty}]\ni p_{s}$, hence $p_{s}$ has a pre-image $q$ by
the map $f^{rm}$ in the arc $[p_{n+1},p_{\infty,-r}]$ and as
$p_{s}\notin [p_{n+1},p_{\infty,-r}]$, $q\neq p_{s}$, this
contradicts Lemma \ref{pree} since $p_{s}$ is a fixed point of
$f^{rm}$.

Suppose now the second case, $p_{n}\in (p_{n+1},f^{m}(p_{\infty}))$.
By the continuity of $f^{m}$, $f^{m}([p_{n+1},p_{\infty}])\supset
[p_{n+1},f^{m}(p_{\infty})]\ni p_{n}$. So $p_{n}$ has a preimage $q$
by the map $f^{m}$ in the arc $[p_{n+1},p_{\infty}]$. By Lemma
\ref{pree}, $q$ is a periodic point that belongs to the orbit of
$p_{n}$ hence $q$ has the same period as $p_{n}$. Hence,
$q\in(p_{n},p_{\infty}]\cap Fix(f^{N_{n}})$, this contradict
(\ref{e2}).

It follows that for any $n\in\mathbb{N}$, the orbit of the point
$p_{\infty}$ under the map $f^{N_{n+1}}$ is included into the set
$V_{n}$ and as diam($V_{n})=\textrm{diam}(U_{n})$,\\ $\underset{n\to
+\infty}\lim \textrm{diam}(V_{n})=0$, by Lemma 2.5. This implies
that $p_{\infty}$ is regularly recurrent point.
\end{proof}
\medskip

\section{\bf Proof of Theorem \ref{pr1} and Corollary \ref{c33}}
\medskip

\begin{proof}[Proof of Theorem \ref{pr1}:]
The ``if'' part of the Theorem results clearly from Theorem
\ref{nagh} since $f$ is in particular monotone. Lets prove the
``only if'' part:

Assume that $f$ is pointwise-recurrent, then $f$ is surjective. We
will use Lemma \ref{l33}: Let $I\subset D$ be a nondegenerate arc.
Since $B(D)$ is discrete, there exists a non-degenerate open arc
$J\subset I$ containing no branch points, hence $J$ is an open
subset in $D$. So let $x\in J$. Since $f$ is pointwise-recurrent,
one can find $n, m\in\mathbb{N}$ such that $x,f^{m}(x)\in J$ and
$f^{n+m}(x)\in(x,f^{m}(x))$. Thus $(x,f^{m}(x))\subset I$ is the
connected component of $D\setminus\{x,f^{m}(x)\}$ containing the
open arc $(x,f^{m}(x))$. By Theorem \ref{rc}, $(x,f^{m}(x))$
contains a periodic point $p$ and by Lemma \ref{pree}, $p$ has a
unique preimage by $f$. We conclude by Lemma \ref{l33} that $f$ is
monotone. Therefore, by (\cite{Nagh}, Corollary 1.7), $f$ is a
homeomorphism and by Theorem \ref{nagh}, every cut point is
periodic. The proof is complete.
\end{proof}
\medskip

\begin{proof}[Proof of Corollary \ref{c33}:] Let $e\in E(D)$. By Theorem \ref{pr1}, one can find a sequence of periodic points
$(p_{n})_{n\in\mathbb{N}}\subset D$ such that
$p_{n+1}\in(p_{n},p_{n+2})$ for all $n\in\mathbb{N}$ and
$\underset{n\rightarrow +\infty}\lim p_{n} = e$. Hence by Lemma
\ref{l27}, $e$ is regularly recurrent.
\end{proof}
\medskip

\section{\bf Proof of Theorem \ref{pr2}}
We need the following result.

\begin{thm}\cite{Ma2}\label{t41} Let $f: D\rightarrow D$ be a dendrite map. If E($D)$ is countable then
$\overline{R(f)} = \overline{P(f)}$.
\end{thm}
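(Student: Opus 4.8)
Since $P(f)\subseteq R(f)$ we have $\overline{P(f)}\subseteq\overline{R(f)}$ for free, so the whole content is the reverse inclusion, which I would phrase as $R(f)\subseteq\overline{P(f)}$: \emph{every recurrent point is a limit of periodic points}. Fix a recurrent $x$ and $\eps>0$, and assume $x\notin P(f)$ (otherwise there is nothing to prove). Because $D$ is locally connected I can fix a connected open neighbourhood of $x$ of diameter $<\eps$; the goal is to exhibit, inside this neighbourhood, an arc $[a,b]$ whose associated component $U$ (the component of $D\setminus\{a,b\}$ containing $(a,b)$) satisfies $f^{k}(a)\in U$ and $f^{m}(b)\in U$ for some $k,m\in\mathbb{N}$. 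Theorem \ref{rc} then delivers a periodic point inside $U$, hence within $\eps$ of $x$, which is exactly what is needed.

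The core mechanism is the following \emph{monotone return} configuration, and I would first show it suffices. Suppose the orbit of $x$ contains points $y_n\to x$ with $y_{n+1}\in(x,y_n)$ for all $n$ (a sequence converging to $x$ monotonically along a single arc). Writing $y_n=f^{s_n}(x)$, the $y_n$ are distinct, so the exponents $s_n$ are distinct positive integers, in particular unbounded. Put $p_n:=y_n$, $p_\infty:=x$; the hypotheses of Lemma \ref{den} hold, so the components $U_n$ of $D\setminus\{y_n,x\}$ containing $(y_n,x)$ satisfy $\mathrm{diam}(U_n)\to0$. Fix $N$ with $U:=U_N\subset B(x,\eps)$ and set $a:=y_N$, $b:=x$. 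For any $n>N$ we have $y_n\in(x,y_N)\subset U$; choosing $n>N$ with $s_n>s_N$ (possible since the $s_n$ are unbounded) gives at once $f^{s_n}(b)=f^{s_n}(x)=y_n\in U$ and $f^{\,s_n-s_N}(a)=f^{s_n}(x)=y_n\in U$. Thus both endpoints eventually enter $U$, and Theorem \ref{rc} produces the desired periodic point in $U\subset B(x,\eps)$.

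It remains to manufacture a monotone return configuration near $x$, and here is where I expect the real work — and where countability of $E(D)$ must enter. Since $x$ is recurrent and not periodic, $O_f(x)$ accumulates at $x$ through points $\neq x$; I would sort these according to the components of $D\setminus\{x\}$. If infinitely many of them lie in one component, a standard dendrite dichotomy lets me extract either a monotone subsequence converging to $x$ (done, by the previous paragraph) or a subsequence whose arcs $[x,y_n]$ are pairwise disjoint off $x$. In the latter ``disjoint-branch'' case the carrying branches are pairwise disjoint connected sets, so by Lemma \ref{l2} the component $V$ of $D\setminus\{x\}$ containing a sufficiently late return point $z$ has diameter $<\eps$ and is an open neighbourhood of $z$; since images of recurrent points are recurrent ($\omega_f$ being strongly invariant) and $V$ is open, the orbit of $z$ re-enters $V$, and I repeat the entire analysis with $z$ in place of $x$ inside $V\subset B(x,\eps)$ (if $z$ is periodic we are already done).

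The one scenario that escapes this scheme is when the ``disjoint-branch'' alternative recurs at every stage, producing an infinite nest of shrinking branches $V\supset V'\supset\cdots$ in which the monotone configuration never appears. This is the main obstacle, and it is precisely what countability of $E(D)$ is there to rule out: a persistent disjoint branching uses infinitely many independent sub-branches at each center, and stringing one branch choice per level embeds a Cantor set of distinct ends, forcing $E(D)$ to be uncountable. Making this implication quantitative — bounding the recursion by a cardinality argument on $E(D)$ so that the monotone case is reached after finitely many steps (or arguing directly on the nested intersection point) — is the only delicate point; once it is in place the three reductions above close the argument, and the endpoint case $x\in E(D)$ is simply the degenerate ``single component'' instance of the same dichotomy.
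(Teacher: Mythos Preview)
The paper does not prove Theorem~\ref{t41}; it is quoted from Mai--Shi \cite{Ma2}, so there is no in-paper argument to compare against. Judging your plan on its own merits, the step you treat as routine is where it fails. The ``standard dendrite dichotomy'' is false as stated: take $D$ to be a comb, the segment $[0,1]$ with a tooth of length $2^{-n}$ attached at each $2^{-n}$, and $x=0$. If the returns $y_n=f^{s_n}(x)$ sit at the tips of distinct teeth, then all $y_n$ lie in the unique component of $D\setminus\{x\}$, no $y_m$ belongs to $(x,y_n)$ (so there is no monotone subsequence), yet $[x,y_n]\cap[x,y_m]=[0,2^{-\max(m,n)}]$ is nondegenerate (so the arcs are not pairwise disjoint off $x$). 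This ``branching along a spine'' configuration is a genuine third case that your scheme does not cover, and it is precisely the configuration in which producing an arc $[a,b]$ suitable for Theorem~\ref{rc} becomes nontrivial; nothing in your outline handles it.

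The termination step is also not right as written. Your recursion follows a \emph{single} chain $x=x^{(0)},x^{(1)},\dots$ of infinite-order branch points; the side branches at each $x^{(i)}$ contribute one endpoint apiece, and a countable union of countable sets is countable, so no contradiction with countable $E(D)$ arises. To embed a Cantor set you would need a \emph{tree} of such centres---the disjoint-branch alternative would have to recur in \emph{every} branch one might choose at each stage, not merely along the branch the orbit happens to enter---and nothing in your setup forces that. (Mai--Shi in fact prove the result under the weaker hypothesis $\lvert E(D)\rvert<\mathfrak c$, so a family of size continuum must ultimately be exhibited; their argument is necessarily more global than tracking one recurrent orbit.)
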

\medskip

\begin{lem}\label{l26} Let $f: D\rightarrow D$ be a dendrite map. If E($D)$ is countable and $f$ is pointwise-recurrent then $\omega_{f}(x)\cap
P(f)\neq\emptyset$ for all $x\in D$.
\end{lem}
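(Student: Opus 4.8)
The goal is to show that for a pointwise-recurrent dendrite map $f$ with $E(D)$ countable, every $\omega$-limit set $\omega_{f}(x)$ meets $P(f)$. The natural first move is to invoke Theorem \ref{t41}: since $E(D)$ is countable, $\overline{R(f)}=\overline{P(f)}$, and since $f$ is pointwise-recurrent, $R(f)=D$, so $P(f)$ is dense in $D$. Fix $x\in D$. If $x$ is itself periodic there is nothing to prove, so assume $x\notin P(f)$; then $\omega_{f}(x)$ is an infinite set (it is strongly invariant and contains $x$, being recurrent). The plan is to locate a periodic point \emph{inside} $\omega_{f}(x)$ by exploiting the arc structure of the dendrite together with Theorem \ref{rc}.

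\textbf{Key steps.} First I would observe that $\omega_{f}(x)$, being a nonempty closed strongly invariant subset of a dendrite, is itself a subcontinuum-or-not — in general it need not be connected, so I would instead work with the subdendrite $D_{0}:=[\omega_{f}(x)]$, the convex hull (the union of all arcs joining pairs of points of $\omega_{f}(x)$), which is a subdendrite and is $f$-invariant because $f$ maps arcs onto arcs and permutes $\omega_{f}(x)$. Since $\omega_{f}(x)$ is infinite, $D_{0}$ is nondegenerate, so it contains a nondegenerate arc $[a,b]$ with $a,b\in\omega_{f}(x)$. Next, because $P(f)$ is dense in $D$ (first step) and cut points are dense, I would pick a cut point $c\in(a,b)$ that is a limit of periodic points, or more directly pick periodic points $p,q$ close to $a$ and $b$ respectively on either side of a chosen cut point, so that the component $U$ of $D\setminus\{p,q\}$ containing $(p,q)$ is a small neighborhood of an interior point of $[a,b]$. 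The third step is the key dynamical input: since $a,b\in\omega_{f}(x)$, the forward orbit of $x$ returns arbitrarily close to both $a$ and $b$, so for suitable iterates the points $f^{k}(p)$ and $f^{m}(q)$ — or rather an appropriate pair of points whose orbits under $f$ we can force into $U$ — land in $U$; then Theorem \ref{rc} produces a periodic point in $U$. Iterating this with $U$ shrinking to a point of $\omega_{f}(x)$ (using Lemma \ref{l2} or Lemma \ref{den} to guarantee the diameters go to zero), and using that $P(f)$ is closed-up-to-limits via Lemma \ref{l27} (a decreasing nested sequence of such periodic points converges to a point of $\omega_{f}(x)$ which is then regularly recurrent, hence its minimal orbit lies in $\omega_{f}(x)$), I would extract a periodic point actually lying in $\omega_{f}(x)\cap P(f)$.

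\textbf{Main obstacle.} The delicate point is the third step: Theorem \ref{rc} needs \emph{iterates of the endpoints $x,y$ of the arc} to enter $U$, but what the $\omega$-limit condition gives me directly is that iterates of the single point $x$ enter neighborhoods of $a$ and $b$. Bridging this gap requires choosing the periodic points $p,q$ bounding $U$ carefully — close enough to $a,b$ that the recurrence of $x$ near $a$ forces $f^{k}(p)$ near $a$ as well (by continuity and by the fact that $p$ is periodic so its whole orbit is a fixed finite set one can control), and then arranging that this orbit point of $p$, or of $q$, sits inside $U$. An alternative route that sidesteps this is to apply Theorem \ref{rc} not to $p,q$ but to use density of $P(f)$ to find a periodic point $p_{0}$ in $U$ directly when $U$ is chosen as a component cut off by two periodic points whose orbits are known to return — but verifying the orbit-return hypothesis still reduces to the same continuity-plus-periodicity argument. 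So I expect the bulk of the work, and the only real subtlety, to be the bookkeeping that turns ``$\omega_{f}(x)$ contains the arc endpoints $a,b$'' into ``some iterate of each bounding point of a small arc-neighborhood lands in that neighborhood,'' after which Theorem \ref{rc}, Lemma \ref{pree}, and Lemma \ref{l27} close the argument routinely.
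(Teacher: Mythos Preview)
Your plan has two real gaps. The obstacle you flag is not overcome: Theorem \ref{rc} needs iterates of the \emph{arc endpoints} $p,q$ to fall into $U$, and choosing $p,q$ periodic gives no such control---the orbit of $p$ is a fixed finite set with no reason to meet the particular component $U$ bounded by $p$ and $q$. Recurrence of $x$ near $a,b$ concerns iterates of $x$, not of $p,q$, and continuity does not transfer the conclusion. More fatally, your closing step is circular: even if you manufactured periodic points $p_{n}$ converging monotonically along an arc to some $p_{\infty}\in\omega_{f}(x)$, Lemma \ref{l27} only yields that $p_{\infty}$ is regularly recurrent, hence $\omega_{f}(p_{\infty})$ is minimal; upgrading ``minimal'' to ``periodic orbit'' is exactly Lemma \ref{l43}, whose proof \emph{invokes the present lemma}. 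So your route, as written, never actually places a periodic point inside $\omega_{f}(x)$.

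The paper avoids both Theorem \ref{rc} and Lemma \ref{l27} entirely. After disposing of the case $\omega_{f}(x)\subset E(D)$ (such a set, if infinite, would be perfect hence uncountable), it picks a cut point $y\in\omega_{f}(x)$, uses density of $P(f)$ to find periodic $p,q$ with $y\in[p,q]$, takes a common period $N$, and observes $y\in[p,f^{N}(y)]\cup[q,f^{N}(y)]$; say $y\in[p,f^{N}(y)]$. Then $y\in f^{N}([p,y])$, giving $y_{-1}\in[p,y]$ with $f^{N}(y_{-1})=y$, and inductively a monotone sequence $y_{-k}\in[p,y]$ with $f^{N}(y_{-(k+1)})=y_{-k}$; its limit $y_{\infty}$ satisfies $f^{N}(y_{\infty})=y_{\infty}$, so $y_{\infty}\in P(f)$. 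The decisive idea your plan is missing is why $y_{\infty}\in\omega_{f}(x)$: since $f^{kN}(y_{-k})=y$ and each $y_{-k}$ is recurrent, one has $y_{-k}\in\omega_{f}(y_{-k})=\omega_{f}(y)$ for every $k$, whence $y_{\infty}\in\omega_{f}(y)\subset\omega_{f}(x)$.
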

\medskip

\begin{proof} If $\omega_{f}(x)\subset E(D)$ then
$\omega_{f}(x)$ is compact countable. If $\omega_{f}(x)$ is infinite
then it is perfect. Since for any $y\in\omega_{f}(x)$,
$y=\underset{k\rightarrow +\infty}\lim f^{n_k}(x)$ where
$(n_k)_{k\in\mathbb{N}}$ is an infinite sequence of positif
integers. As $x\in\omega_{f}(x)$, then $O_f(x)\subset \omega_f(x)$,
hence $y$ is an accumulation point of $\omega_{f}(x)$. So
$\omega_{f}(x)$ is uncountable, a contradiction. Therefore,
$\omega_{f}(x)$ is finite, that is $\omega_{f}(x)$ is a periodic
orbit and so $x$ is a periodic point. One can then assume that
$\omega_{f}(x)\backslash E(D)\neq \emptyset$, so let
$y\in\omega_{f}(x)\backslash E(D)$. Since $R(f) = D$, it follows by
Theorem \ref{t41} that $\overline{P(f)}=D$, so one can find $p,q\in
P(f)$ such that $y\in [p,q]$. Let $N\in\mathbb{N}$ be such that
$f^{N}(p)=p$ and $f^{N}(q)=q$. We already have  $y\in [p,
f^{N}(y)]\cup [q,f^{n}(y)]$. One can suppose that  $y\in [p,
f^{N}(y)]$, the same proof being true with $p$ replaced by $q$. In
this case, we have $y\in f^{N}([p,y])$, so there is $y_{-1}\in
[p,y]$ such that $f^{N}(y_{-1})=y$. Again, $y_{-1}\in [p,y]=
[p,f^{N}(y_{-1})]\subset f^{N}([p,y_{-1}])$, so there is $y_{-2}\in
[p,y_{-1}]$ such that $f^{N}(y_{-2})=y_{-1}$. Thus, we construct by
induction a sequence $(y_{-n})_{n\in\mathbb{N}}\subset [p,y]$ such
that $y_{-(k+1)}\in (y_{-k}, y_{-(k+2)})$,
$f^{N}(y_{-(k+1)})=y_{-k}$ for every $k\in\mathbb{N}$, and
$\underset{n\to +\infty}\lim y_{-n}= y_{\infty}\in [p,y]$. Therefore
one has $f^{N}(y_{\infty})= y_{\infty}$ (by the continuity of
$f^{N}$) and so $y_{\infty}\in P(f)$. Since $R(f)=D$ and for any
$k\in\mathbb{N}$, $f^{kN}(y_{-k})=y$, it follows that $y_{-k}\in
\omega_{f}(y_{-k}) = \omega_{f}(y)$. Therefore
$y_{\infty}\in\omega_{f}(y)$ and as $\omega_{f}(y)\subset
\omega_{f}(x)$, we conclude that $y_{\infty}\in\omega_{f}(x)\cap
P(f)$.
\end{proof}
\medskip

\begin{lem}\label{l43} Let $f:D\rightarrow D$ be a dendrite map and let $p_{\infty}$ as in Lemma \ref{l27}. If E($D)$ is countable and $f$ is pointwise-recurrent, then
$p_{\infty}$ is a periodic point.
\end{lem}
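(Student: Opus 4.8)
The plan is to bootstrap from Lemma \ref{l27}, which already shows that $p_{\infty}$ is regularly recurrent, together with Lemma \ref{l26}, which — under the countability hypothesis on $E(D)$ — produces a periodic point inside every $\omega$-limit set. The mechanism is that regular recurrence forces $\omega_{f}(p_{\infty})$ to be a minimal set, a minimal set that meets $P(f)$ can only be a single periodic orbit, and $p_{\infty}$ lies in its own $\omega$-limit set, so it must be periodic.

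More precisely, I would first note that we may assume $p_{\infty}$ is not already periodic (otherwise there is nothing to prove), exactly as in the reduction at the start of the proof of Lemma \ref{l27}. Then Lemma \ref{l27} gives that $p_{\infty}$ is regularly recurrent; as recalled after the definitions in Section 1, a regularly recurrent point is almost periodic, so $M := \omega_{f}(p_{\infty})$ is a minimal set of $f$. Next, since $E(D)$ is countable and $f$ is pointwise-recurrent, Lemma \ref{l26} applied to $x = p_{\infty}$ yields a point $q \in M \cap P(f)$. The orbit $O_{f}(q)$ is non-empty, finite (hence closed) and $f$-invariant, and it is contained in $M$; by minimality of $M$ we conclude $M = O_{f}(q)$, so in particular $M$ is finite, i.e. a single periodic orbit.

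Finally, since $f$ is pointwise-recurrent we have $p_{\infty} \in R(f)$, that is $p_{\infty} \in \omega_{f}(p_{\infty}) = M = O_{f}(q)$, and therefore $p_{\infty}$ is a periodic point of $f$, which is the assertion of Lemma \ref{l43}.

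I do not expect any real obstacle at this stage: all the genuine work is hidden in Lemmas \ref{l27} and \ref{l26} — ultimately in the diameter estimates of Lemmas \ref{l2} and \ref{den} and the preimage-uniqueness of Lemma \ref{pree} — and once those are in hand the present lemma follows from the purely formal fact that a minimal set containing a periodic point coincides with that periodic orbit. The only point worth a line of care is the initial reduction to the case where $p_{\infty}$ is non-periodic, so that we are genuinely in the situation to which Lemma \ref{l27} applies, but this is harmless.
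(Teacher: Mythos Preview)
Your proof is correct and follows exactly the paper's argument: Lemma~\ref{l27} gives regular recurrence, hence $\omega_f(p_\infty)$ is minimal, Lemma~\ref{l26} provides a periodic point inside it, so minimality forces $\omega_f(p_\infty)$ to be a periodic orbit containing $p_\infty$. The only superfluous step is your initial reduction to the non-periodic case --- Lemma~\ref{l27} already applies as stated without that assumption --- but this is harmless.
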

\medskip

\begin{proof} By Lemma \ref{l27}, $p_{\infty}$ is regularly recurrent, hence $\omega_{f}(p_{\infty})$ is a minimal set. It follows, by Lemma \ref{l26}, that $\omega_{f}(p_{\infty})$ is a
periodic orbit, and so $p_{\infty}$ is a periodic point since
$p_{\infty}\in \omega_{f}(p_{\infty})$.
 \end{proof}
\medskip

\begin{lem} Let $D$ be a dendrite with countable set of endpoints. Then every sub-dendrite of $D$ has countable set of endpoints.
\end{lem}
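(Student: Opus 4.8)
The plan is to fix a sub-dendrite $E$ of $D$, i.e.\ a subcontinuum of $D$ (which is itself a dendrite, by (\cite{Nadler}, Theorem 10.10)), and to control the size of $E(E)$ by splitting it as $E(E)=\bigl(E(E)\cap E(D)\bigr)\cup\bigl(E(E)\setminus E(D)\bigr)$. The first part sits inside the countable set $E(D)$, so everything reduces to showing $E(E)\setminus E(D)$ is countable, and for that I would construct an injection $e\longmapsto g_{e}$ of $E(E)\setminus E(D)$ into $E(D)$.

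Here is how I would build $g_{e}$. Fix $e\in E(E)\setminus E(D)$. Since $e\notin E(D)$, the set $D\setminus\{e\}$ has at least two components; since $e\in E(E)$, the set $E\setminus\{e\}$ is connected, hence lies in a single component of $D\setminus\{e\}$, so I may choose another component $C_{e}$ of $D\setminus\{e\}$, which is then disjoint from $E$. Local connectedness of $D$ gives $\overline{C_{e}}=C_{e}\cup\{e\}$, so $\overline{C_{e}}$ is a non-degenerate sub-dendrite of $D$ with $\overline{C_{e}}\cap E=\{e\}$; since a non-degenerate dendrite has at least two endpoints I can pick an endpoint $g_{e}$ of $\overline{C_{e}}$ with $g_{e}\neq e$, so $g_{e}\in C_{e}$. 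I would then check that $g_{e}\in E(D)$: writing $D\setminus\{g_{e}\}=\bigl(\overline{C_{e}}\setminus\{g_{e}\}\bigr)\cup\bigl(D\setminus C_{e}\bigr)$, the first set is connected because $g_{e}$ is an endpoint of the dendrite $\overline{C_{e}}$, the second set is connected because it is the union of the closures of the remaining components of $D\setminus\{e\}$, each of which contains $e$, and both sets contain $e$; hence $D\setminus\{g_{e}\}$ is connected.

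For injectivity, suppose $e\neq e'$ in $E(E)\setminus E(D)$ but $g_{e}=g_{e'}=:g$, so that $g\in C_{e}\cap C_{e'}$. Since the arc between two points of a sub-dendrite stays inside it, $[e,g]\subseteq\overline{C_{e}}$ and $[e',g]\subseteq\overline{C_{e'}}$. Let $w$ be the point determined by $[e,g]\cap[e',g]=[w,g]$; a routine arc argument shows $[e,w]\cup[w,e']=[e,e']$, so $w\in[e,e']$. Because $E$ is a sub-dendrite containing $e$ and $e'$, $[e,e']\subseteq E$, hence $w\in E$. But $w\in[e,g]\subseteq\overline{C_{e}}$ forces $w\in\overline{C_{e}}\cap E=\{e\}$, and $w\in[e',g]\subseteq\overline{C_{e'}}$ forces $w\in\overline{C_{e'}}\cap E=\{e'\}$; therefore $e=w=e'$, a contradiction. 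So $e\longmapsto g_{e}$ is injective, $E(E)\setminus E(D)$ is countable, and thus $E(E)$ is countable.

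Most of this is routine bookkeeping with standard dendrite facts (components of $D\setminus\{x\}$ are open and have closure $C\cup\{x\}$; subcontinua of dendrites are dendrites; a non-degenerate dendrite has at least two endpoints; connected subsets of a dendrite are arcwise connected). The step that will need the most care is the injectivity argument — specifically, verifying that the branching point $w$ of $[e,g]$ and $[e',g]$ really lies on the arc $[e,e']$, since it is this fact that simultaneously pins $w$ to $e$ and to $e'$.
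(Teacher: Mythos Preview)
Your proof is correct. The median-point fact you flag --- that for three points $e,e',g$ in a dendrite the intersection $[e,g]\cap[e',g]$ is an arc $[w,g]$ with $w\in[e,e']$ --- is standard (it is the existence of the unique ``center'' of a triple in a dendrite), so the injectivity step goes through as written.

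Your route differs from the paper's. The paper invokes the monotone (first-point) retraction $r:D\to Y$ onto the sub-dendrite and argues that $E(Y)\subseteq r(E(D))$: for each $a\in E(Y)$ one finds a component $Z$ of $D\setminus Y$ with $a\in\overline{Z}$, picks $b\in\overline{Z}\cap E(D)$, and notes $r(b)=a$; this gives $\mathrm{Card}(E(Y))\le\mathrm{Card}(E(D))$ in one line. You instead build an explicit injection $E(E)\setminus E(D)\hookrightarrow E(D)$ by hand. The underlying geometric idea is the same --- each endpoint of the sub-dendrite that is not already an endpoint of $D$ has a ``branch'' of $D$ hanging off it, and that branch contains an endpoint of $D$ --- but your argument is self-contained and avoids citing the retraction theorem, at the cost of a longer injectivity verification. (Incidentally, the paper also asserts the reverse inclusion $r(E(D))\subset E(Y)$, which is false in general --- think of an arc $Y$ inside a tripod $D$ --- but only the inclusion $E(Y)\subseteq r(E(D))$ is needed for the cardinality bound.)
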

\medskip

\begin{proof} Let $Y$ be a sub-dendrite of $D$. By (\cite{go}, page 157, (see also
\cite{Nadler})), $Y$ is a monotone retraction of $D$ by
$r:D\rightarrow Y$. Then we have $r(E(D))=E(Y)$. Indeed,
$r(E(D))\subset E(Y)$ follows from the fact that $r(x)$ lies in any
arc joining $x$ to any point of $Y$. Let $a\in E(Y)$ and let $Z$ the
connected component of $D\setminus Y$ such that $\overline{Z}$
contains $a$. Then $r(\overline{Z})=\{a\}$. As $\overline{Z}\cap
E(D)\neq \emptyset$, there is $b\in \overline{Z} \cap E(D)$ with
$r(b)=a$. It follows that $Card(E(Y))\leq Card(E(D))$ and hence
$E(Y)$ is countable.
\end{proof}

\begin{proof}[Proof of Theorem \ref{pr2}:] The ``if'' part of the Theorem is clear. Lets prove the ``only
if'' part: By Theorem \ref{t41}, one has $\overline{P(f)}= D$.

\textit{Claim .1.} If $x\in D$ is not periodic then for any $z\in D$
with
$z\neq x$, $(x,z)$ contains a non periodic branch point.\\
Indeed, $[x,z]$ contains an infinite sequence of branch points that
converges to $x$. Since otherwise, there is $y\in (x,z]$ such that
$(x,y)\cap B(D)\neq \emptyset$. So $(x,y)$ is an open subset of $D$,
hence as $\overline{P(f)}=D$, one can find a sequence
$(p_{n})_{n\in\mathbb{N}}$ of periodic points in $(x,y)$ such that
$p_{n+1}\in(p_{n},p_{n+2})$ for all $n\in\mathbb{N}$, and
$\underset{n\to +\infty}\lim p_{n}= x$. By Lemma \ref{l43}, $x$ is
periodic, a contradiction. Now, Suppose that every branch point in
$(x,z)$ is periodic, then again by Lemma \ref{l43}, $x$ is periodic.
Therefore, necessarily one branch point in $(x,z)$ must be not
periodic.

\textit{Claim.2.} If there exists $x\in D$ not periodic then $D$
contains a sub-dendrite with uncountably set of endpoints.\\
Suppose that $x\in D$ is not periodic. By Claim.1, there is $a\in
B(D)$ not periodic. Now, as $a$ is a branch point, there exist two
nondegenerate arcs $I_{0}$, $I_{1}$ with one endpoint is $x$ and
form with $[x,a]$ a family of three arcs having pairwise disjoint
interior. As $a$ is not periodic then by Claim.1, there is $a_{0}\in
I_{0}$ (resp. $a_{1}\in I_{1}$) a non periodic branch point distinct
from $a$. Denote by $J_{0}$ (resp. $J_{1}$) the arc $[a,a_{0}]$
(resp. $[a,a_{1}]$). In the second step, as $a_{0}$ and $a_{1}$ are
branch points, there exist two nondegenerate arcs $I_{00}$ and
$I_{01}$ (resp. $I_{10}$ and $I_{11}$) with one endpoint is $a_{0}$
(resp. $a_1$) and form with $[a,a_0]$ (resp. $[a,a_1]$) a family of
three arcs having pairwise disjoint interior. Similarly, by Claim.1,
we find, for any $i,j\in\{0,1\}$, a non periodic branch point
$a_{ij}\in I_{ij}$ distinct from $a_{i}$, let denote by
$J_{ij}:=[a_{i},a_{ij}]$. Thus by induction, we construct for all
$n\in\mathbb{N}$ a sequence of arcs $J_{\alpha_n}$ and a sequence of
non periodic branch points $a_{\alpha_n}$ where
$\alpha_n\in\{0,1\}^{n}$ satisfying the following properties:\\
(i) $J_{\alpha_n 0}=[a_{\alpha_n},a_{\alpha_n 0}]$ and $J_{\alpha_n
1}=[a_{\alpha_n},a_{\alpha_n 1}]$, \\
(ii) $J_{\alpha_n}\cap J_{\alpha_n 0}=J_{\alpha_n}\cap J_{\alpha_n
1}= J_{\alpha_n 0}\cap J_{\alpha_n 1}=\{a_{\alpha_n}\}$.
 any $n\in\mathbb{N}$,
$D_{n}=\cup_{\alpha_n\in\{0,1\}^{n}}J_{\alpha_n}$, then clearly
$D_{n}$ is a tree. Also we have $D_{n+1}\subset D_{n}$ for all
$n\in\mathbb{N}$. Hence
$D_{\infty}=\overline{\cup_{n\in\mathbb{N}}D_{n}}$ is a
sub-dendrite. Take $\beta\in\{0,1\}^{\mathbb{N}}$. For each
$n\in\mathbb{N}$, we have $[a,a_{\beta(n)}]\subset
[a,a_{\beta(n+1)}]$ (where for $i\in\mathbb{N}$, we denote by
$\beta(i)$ the first word of length $i$ in the sequence $\beta$).
Then $\overline{\cup_{n\in\mathbb{N}}[a,a_{\beta(n)}]}$ is an arc
where the sequence $(a_{\beta(n)})_{n\in\mathbb{N}}$ is monotone in
that arc so it converges to a point namely $a_{\beta}$ that belongs
to the dendrite $D_{\infty}$. The points $a_{\beta}$ for
$\beta\in\{0,1\}^{\mathbb{N}}$ are the endpoints of the sub-dendrite
$D_{\infty}$. Hence,  $D_{\infty}$ has an uncountable set of
endpoints .

We deduce then Theorem \ref{pr2} from Claim.2 and Lemma 4.4.
\end{proof}
\medskip
\begin{proof}[Proof of Corollary 1.6:] By Theorem \ref{pr2}, $f$ is
pointwise-periodic homeomorphism. As $E(T)$ is finite, there is
$N\in \mathbb{N}$ such that every endpoint of $T$ is fixed by $f^N$.
Let $a,b\in T$ be two distinct endpoints of $T$. As $f$ is a
homeomorphism (and so is $f^{N}$), the arc $[a,b]$ is
$f^{N}$-invariant, hence every point in $[a,b]$ is fixed by $f^{N}$.
Since $T=\cup_{a,b\in E(T)} [a,b]$, every point in the tree $T$ is
fixed by $f^{N}$. So $f$ is periodic.
\end{proof}

\textbf{Acknowledgements.} I would like to thanks Professor Habib
Marzougui for helpful discussions on the subject of the paper.
\bibliographystyle{amsplain}

\end{document}